\providecommand{\U}[1]{\protect\rule{.1in}{.1in}}
\providecommand{\U}[1]{\protect\rule{.1in}{.1in}}
\newcommand{\BE}{\begin{equation}}
\newcommand{\EE}{\end{equation}}
\numberwithin{equation}{section}
\newtheorem{proposition}{Proposition}[section]
\newtheorem{theorem}[proposition]{Theorem}
\newtheorem{lemma}[proposition]{Lemma}
\newtheorem{remark}[proposition]{Remark}
\def\dfrac{\displaystyle\frac}
\begin{document}
 \title{\bf
  A linearized second-order scheme for nonlinear time fractional Klein-Gordon type equations}
 \author{Pin Lyu\thanks{
 Email: lyupin1991@163.com.}\qquad
 Seakweng Vong
 \thanks{Corresponding author. Email: swvong@umac.mo.}\\
{\footnotesize  \textit{Department of Mathematics, University of Macau, Avenida da Universidade, Macau, China}}}

 \date{}
 \maketitle
\begin{abstract}
 We consider difference schemes for nonlinear time fractional Klein-Gordon type equations in this paper.
 A linearized scheme is proposed to solve the problem.
 As a result,  iterative method need not be employed.
 One of the main difficulties for the analysis is that certain weight averages of the approximated solutions are considered in the discretization
 and standard energy estimates cannot be applied directly.
 By introducing a new grid function, which further approximates the solution, and using ideas in some recent studies,
 we show that the method converges with second-order accuracy in time.
 \end{abstract}
 {\bf Key words:} Linearized scheme; Time fractional differential equations; Nonlinear Klein-Gordon equations; Convergence

\section{Introduction}

 This paper studies finite difference schemes for nonlinear fractional order Klein-Gordon type equations.
 Fractional differential equations have applications in physics, biology and petroleum industry.
 Interested reader can refer to \cite{Kilbas2006,EBarkai_Phy,Meerschaert2001_Phy,Meerschaert2002_Phy,West2007_Bio,MFitt2009_Oill} for more details.
 One of the key features of fractional derivatives are their nonlocal dependence
 which make fractional differential equations suitable to model some phenomena.
 However, the nonlocal dependence causes difficulty in the study of these equations.
 In the past decade, many works have been done on the study of effective numerical method for time fractional differential equations, the most popular approach are finite difference, spectral and finite element, see \cite{YusteSIAM2005, VongAML,
 SunJCP2014,LiuSIAM2008,LiuFCAA2013,XuJCP2007,JinSIAM2013,
 JinSIAM2016,DengJCP2007,DengSIAM2008,CuiJCP2009,VongCMA2014,Zhao,Lei} and the references therein. Klein-Gordon equation is a basic equation to describe many phenomena.
 Solving it in numerically is an interesting topic. Many efficient methods have been employed to solve the linear and nonlinear Klein-Gordon, or sin-Gordon equations successfully. Such as the Adomian's decomposition method \cite{Kaya2004,El-Sayed2003}, the variational iteration method (VIM) \cite{Batiha2007}, the He's variational iteration method \cite{Yusufo2008} and the Homotopy analysis method (HAM) \cite{Jafari2009}, and so on. When studying this kind of equations with fractional order derivative, it would be more challenging.

 In this paper, we consider finite difference schemes for nonlinear time fractional Klein-Gordon type equations
 with the following form:
\begin{align}\label{eq1}
&^C_0D_t^{\alpha} u(x,t)=\frac{\partial^2 u(x,t)}{\partial x^2}-f(u)+p(x,t), \quad  x\in (a,b),\quad t\in(0,T],\\ \label{eq2}
& u(a,t)=u(b,t)=0, \quad t\in(0,T],\\ \label{eq3}
&u(x,0)=\varphi(x), \quad u_t(x,0)=\psi(x), \quad x\in [a,b],
\end{align}
where $1<\alpha<2$, $^C_0D_t^{\alpha}$ denotes the Caputo's derivative which is defined by
$$^C_0D_t^{\alpha} u(x,t)=\frac1{\Gamma(2-\alpha)}\int_0^t\frac{\partial^2 u(x,s)}{\partial s^2}\frac{ds}{{(t-s)}^{\alpha-1}},$$
and $f$ is a continuous function and satisfies the Lipschitz condition:
\begin{align}\label{Lip}
|f(\phi_1)-f(\phi_2)|\leq L|\phi_1-\phi_2|, \quad \forall \phi_1,\phi_2 \in \Omega.
\end{align}
Here $\Omega$ is a suitable domain, and $L$ is a positive constant only depends on the domain $\Omega$.

Lots of literatures have devoted to the study of time fractional Klein-Gordon (or sin-Gordon) type equations, see also \cite{Alireza2011,CuiNPDE2009,Vong2014,Vong2015,ChenH-Taiwan,Dehghan2015,Jafari2013} and references therein. The authors in \cite{Vong2014,Vong2015} proposed space compact schemes to solve the one and two dimensional time fractional Klein-Gordon type equations, respectively, and stability and convergence were analyzed by energy method. In \cite{ChenH-Taiwan}, a fully spectral scheme with finite difference discretization in time and Legendre spectral approximation in space was derived. Moreover, the meshless method based on radial basis function was used in \cite{Dehghan2015} to obtain an unconditionally stable discrete scheme for this kind of equation.  We note that the finite difference schemes (or finite difference discretization in time) proposed in \cite{Vong2014,Vong2015,ChenH-Taiwan,Dehghan2015} are nonlinear with temporal convergence order ${\cal O}(\tau^{3-\alpha})~(1<\alpha<2)$, or linear with convergence order ${\cal O}(\tau)$. These motivate us to investigate linearized and higher temporal convergence order scheme to solve the nonlinear equations. We remark that linearized scheme was shown to be very efficiently for dealing with nonlinear problems \cite{zhao_siamJSC,SunNPDE2016,WangD2014,Ran2016}.
 The main advantage is that the nonlinear term is evaluated at previous time level so that iterative method is not needed to solve the solution at the current time level, which would be more convenience and save much computational costs. However, to our knowledge,
 the idea has not been applied to construct second temporal convergence order scheme for time-fractional differential equation. Our scheme in this paper is second-order in time. The proposed method is based on the descretization given in \cite{Alikhanov} and the idea of linearized scheme.
  We further note that the discretization formula for fractional derivatives developed in \cite{Alikhanov} is not given at grid points.
 This induce some technical difficulties for shifting the evaluation of nonlinear term to previous time level.
 Inspired by some estimates in the recent works \cite{Liao0,Liao2},
 we show that our proposed scheme converges with second-order in time.

The rest of the paper is organized as follows. In section \ref{derivation}, we first give some estimates of the discretization coefficients on the fractional derivative, and using the weighted approach we derive a linearized implicit scheme for the problem \eqref{eq1}--\eqref{eq3}. The scheme is shown to be convergent with ${\cal O}(\tau^2+h^2)$ and stable by discrete energy method in section \ref{Analysis}. Spatial fourth-order compact scheme is proposed in section \ref{compactscheme}. In section \ref{Numericalexperiments}, we test some numerical examples to confirm the theoretical results. A brief conclusion is followed in the last section.

\section{Derivation of the difference scheme}\label{derivation}

\subsection{Preliminary notations and lemmas}
The following notations are needed to present our scheme. Let $\tau=\frac{T}{N}$ and $h=\frac{b-a}{M}$ be the temporal and spatial step sizes respectively, where $N$ and $M$ are some given integers. For $n=0,1,...,N$, and $i=0,1,...,M$, denote $t_n=n\tau$, $x_i=ih$, $t_{n+\theta}=(n+\theta)\tau$ for a constant $\theta\in [0,1]$, $\varphi_i=\varphi(x_i)$ and $\psi_i=\psi(x_i)$. We next introduce the grid function spaces $\mathcal{V}_h=\big\{u|u=\{u_i|0\leq i\leq M\}\mbox{ and }u_0=u_M=0\big\}$ and $\mathcal{W}_\tau=\{w^n|0\leq n\leq N\}$. For any $u,v\in \mathcal{V}_h$, we denote
$$\delta_x u_{i-\frac12}=\frac{u_i-u_{i-1}}{h}, \quad \delta_x^2 u_i=\frac{\delta_x u_{i+\frac12}-\delta_x u_{i-\frac12}}{h}=\frac{u_{i+1}-2u_i+u_{i-1}}{h^2},$$
and the inner product and norms
$$\langle u,v \rangle=h\sum_{i=1}^{M-1}u_iv_i,\quad \|u\|=\sqrt{\langle u,u\rangle},\quad |u|_1=\sqrt{h\sum_{i=1}^M\mid\delta_x u_{i-\frac12}\mid^2},\quad \|u\|_\infty=\max_{1\leq i\leq M-1}|u_i|.$$
For any $u^n\in\mathcal{W}_\tau$, we further consider
$$ \delta_t u^{n+\frac12}=\frac{u^{n+1}-u^n}{\tau},\quad \delta_{\hat t} u^{n}=\frac{u^{n+1}-u^{n-1}}{2\tau}.$$

 Discretization on the fractional derivative of our scheme based on the following lemma, which is obtained straightly by replacing the parameter $\sigma=\frac{2-\alpha}{2}$ $(0<\alpha<1)$ in Lemma 2.3 of \cite{Liao0} with $\theta=\frac{3-\alpha}{2}$ $(1<\alpha<2)$
 and, in fact,  it also can be found in Lemma 2.3 of \cite{Liao2}.
\begin{lemma}\label{lemma_time}
 Suppose $1<\alpha<2,~\theta=\frac{3-\alpha}{2},~v(t)\in{\cal C}^2[0,T]\cap{\cal C}^3(0,T]$, and there exists a positive constant $C$ such that $v'''(t)\leq C t^{\alpha-2}$ in $[0,T]$. Then
 \begin{align*}
 &\frac1{\Gamma(2-\alpha)}\int_0^{t_{n+\theta}}\frac{v'(s)}{(t_{n+\theta}-s)^{\alpha-1}}ds
 -\Delta_t^\alpha v(t_{n+\theta})={\cal O}(\tau^2),
 \end{align*}
 where $\Delta_t^\alpha v(t_{n+\theta})=\frac{\tau^{1-\alpha}}{\Gamma(3-\alpha)}\sum_{s=0}^{n}c_{n-s}^{(n+1)}[v(t_{s+1})-v(t_s)]$, and $c_0^{(1)}=a_{0}$ for $n=0$,
 $$c_m^{(n+1)}=\left\{\begin{array}{ll}
 a_{0}+b_{1}, &m=0,\\
 a_m+b_{m+1}-b_m, &1\leq m\leq n-1,\\
 a_{n}-b_{n}, &m=n;\end{array}\right.$$
 for $n\geq1$,
 in which $ a_{0}=\theta^{2-\alpha}$, $ a_{l}=(l+\theta)^{2-\alpha}-(l-1+\theta)^{2-\alpha}$, for $l\geq1$; and $b_{l}=\frac1{3-\alpha}[(l+\theta)^{3-\alpha}-(l-1+\theta)^{3-\alpha}]-\frac12[(l+\theta)^{2-\alpha}+(l-1+\theta)^{2-\alpha}].$
 \end{lemma}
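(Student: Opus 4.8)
The plan is to reduce the claimed estimate to the corresponding statement in \cite{Liao0} (equivalently \cite{Liao2}) via the substitution indicated in the paragraph preceding the lemma. First I would recall that Lemma 2.3 of \cite{Liao0} is stated for $0<\alpha<1$ with a parameter $\sigma=1-\tfrac{\alpha}{2}$, and that it provides an $\mathcal{O}(\tau^2)$ approximation $\Delta_t^\alpha v(t_{n+\sigma})$ to the Caputo fractional integral $\frac{1}{\Gamma(2-\alpha)}\int_0^{t_{n+\sigma}}\frac{v'(s)}{(t_{n+\sigma}-s)^{\alpha-1}}\,ds$, under the regularity hypothesis $v\in\mathcal{C}^2[0,T]\cap\mathcal{C}^3(0,T]$ together with a bound of the form $|v'''(t)|\le C t^{\beta}$ for an appropriate negative exponent $\beta$. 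The key observation is that the algebraic identities defining $a_l$, $b_l$ and $c_m^{(n+1)}$ depend only on the single parameter $\theta$ (playing the role of $\sigma$) and on the exponents $2-\alpha$ and $3-\alpha$ appearing in the quadrature; they do not use $0<\alpha<1$ in any essential way. Thus the whole construction carries over formally once we set $\theta=\tfrac{3-\alpha}{2}$, which for $1<\alpha<2$ again lies in $(\tfrac12,1)\subset[0,1]$, exactly the range used in the original proof.

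Next I would check that the hypotheses match under the substitution. When $0<\alpha<1$, the natural singularity bound in \cite{Liao0} is $|v'''(t)|\le C t^{\alpha-1-\epsilon}$-type (or the sharper $t^{\sigma\alpha-1}$) behaviour coming from typical solutions of the fractional problem; tracking the exponent bookkeeping through the Taylor-expansion and weighted-integral estimates that occur in that proof, the role played by the exponent is to guarantee integrability of the local truncation error against the kernel $(t_{n+\theta}-s)^{-(\alpha-1)}$ near $s=0$. With the shift $\alpha\mapsto\alpha$ but $\sigma\mapsto\theta=\tfrac{3-\alpha}{2}$, the analogous requirement becomes precisely $|v'''(t)|\le C t^{\alpha-2}$, which is what the statement assumes (here $\alpha-2\in(-1,0)$, so $t^{\alpha-2}$ is integrable, as needed). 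So I would verify line by line that every inequality in the \cite{Liao0} argument remains valid with this exponent and with $1<\alpha<2$, in particular that the constants stay finite and that the two pieces of the error — the interpolation error of the piecewise-linear quadrature on $[0,t_n]$ and the boundary piece on the short interval $[t_n,t_{n+\theta}]$ — are each $\mathcal{O}(\tau^2)$.

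The main obstacle, and the only part requiring genuine care rather than formal substitution, is the treatment of the first subinterval $[0,t_1]$: there the integrand $v'(s)(t_{n+\theta}-s)^{-(\alpha-1)}$ is handled using the singularity bound on $v'''$, and one must confirm that replacing $\sigma\in(0,1)$ by $\theta\in(\tfrac12,1)$ together with $\alpha\in(1,2)$ does not spoil the cancellation that produces the extra power of $\tau$. Concretely, one expands $v$ about $t_{n+\theta}$ (or uses the integral remainder form of Taylor's theorem), bounds the remainder by $C\,s^{\alpha-2}$ on $(0,t_1]$ and by $C$ on $[t_1,t_{n+\theta}]$, and estimates $\int_0^{t_1} s^{\alpha-2}\,ds = \tfrac{1}{\alpha-1}t_1^{\alpha-1}=\mathcal{O}(\tau^{\alpha-1})$, which combined with the weight factor and the $\tau^{1-\alpha}$ normalization in $\Delta_t^\alpha$ yields an $\mathcal{O}(\tau^2)$ contribution. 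Once this boundary estimate is in place, the interior estimate is the standard second-order error of the Crank--Nicolson-type (mid-interval) quadrature, and assembling the two completes the proof. Since all of this is already carried out in \cite{Liao0,Liao2}, I would present the argument as: state the correspondence $\sigma\leftrightarrow\theta$, note that the defining relations for $a_l,b_l,c_m^{(n+1)}$ are purely algebraic and unchanged, verify the regularity hypothesis transforms to $|v'''(t)|\le Ct^{\alpha-2}$, and invoke the cited lemmas for the quantitative bound.
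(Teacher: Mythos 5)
Your proposal matches the paper's treatment: the paper gives no proof of this lemma at all, stating only that it follows from Lemma 2.3 of \cite{Liao0} by replacing $\sigma=\frac{2-\alpha}{2}$ ($0<\alpha<1$) with $\theta=\frac{3-\alpha}{2}$ ($1<\alpha<2$), and that it appears directly as Lemma 2.3 of \cite{Liao2}. Your additional checks — that $\theta\in(\frac12,1)$, that the coefficient identities are purely algebraic, and that the singularity hypothesis transforms to $|v'''(t)|\le Ct^{\alpha-2}$ with $\alpha-2\in(-1,0)$ integrable — are correct and simply make explicit what the paper leaves to the citation.
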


\begin{lemma}\label{akbk}
The coefficients $a_k$, $b_k$, and $c_k^{(n+1)}$ defined in Lemma \ref{lemma_time} satisfy
\begin{align*}
& (a)\quad 0<b_k<\frac{\alpha-1}{2(3-\alpha)}a_k<\frac12 a_k,~k\geq 1, \quad \quad (b)\quad \sum_{k=1}^{n}b_k<\frac12\sum_{k=0}^na_k=\frac12(n+\theta)^{2-\alpha},~n\geq 1,\\
& (c)\quad c_n^{(n+1)}>\frac{2-\alpha}{2(n+\theta)^{\alpha-1}}, \quad\quad (d)\quad c_0^{(n+1)}>c_1^{(n+1)}>\cdots>c_{n-1}^{(n+1)}>c_{n}^{(n+1)},\\
&(e)\quad (2\theta-1)c_0^{(n+1)}-\theta c_1^{(n+1)}>0,\quad \quad (f)\quad\sum_{k=0}^{n}c_k^{(n+1)}=\sum_{k=0}^{n}c_k^{(k+1)}+\sum_{k=1}^{n}b_k=(n+\theta)^{2-\alpha},\\
&(g)\quad c_{n-1}^{(n+1)}=c_{n-1}^{(n)}+b_n,~n\geq 1;\quad\quad c_k^{(n+1)}=c_k^{(n)},~0\leq k\leq n-2,~n\geq 2,\\
&(h)\quad \sum_{k=0}^n\frac1{c_k^{(n+1)}}<\frac{2{(n+1)}^\alpha}{2-\alpha},~n\geq 1.
\end{align*}
\end{lemma}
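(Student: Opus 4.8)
The plan is to express all the coefficients through the single function $f(s)=s^{2-\alpha}$ ($s>0$), for which $f'(s)=(2-\alpha)s^{1-\alpha}>0$, $f''(s)=-(2-\alpha)(\alpha-1)s^{-\alpha}<0$, and $|f''|$ is decreasing; recall also $\theta=\frac{3-\alpha}{2}\in(\frac12,1)$ and $2-\alpha,\alpha-1\in(0,1)$. Since $\frac1{3-\alpha}s^{3-\alpha}$ is an antiderivative of $f$, one has $a_k=f(k+\theta)-f(k-1+\theta)=(2-\alpha)\int_{k-1+\theta}^{k+\theta}s^{1-\alpha}\,ds$ and $b_k=\int_{k-1+\theta}^{k+\theta}f(s)\,ds-\frac12[f(k+\theta)+f(k-1+\theta)]$; thus $b_k$ is the trapezoidal-rule error on the unit interval $[k-1+\theta,k+\theta]$, with Peano-kernel representation $b_k=-\frac12\int_{k-1+\theta}^{k+\theta}(s-k+1-\theta)(k+\theta-s)f''(s)\,ds$. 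Since $f''<0$ this gives $b_k>0$, and since $|f''|$ is decreasing it gives that $b_k$ is decreasing in $k$. To prove (a), I would bound the Peano integral using $(s-k+1-\theta)(k+\theta-s)\le\frac14$ and, since $s\ge\theta$, $|f''(s)|=(2-\alpha)(\alpha-1)s^{-\alpha}\le\theta^{-1}(2-\alpha)(\alpha-1)s^{1-\alpha}$, and then compare with the integral expression for $a_k$; this yields $0<b_k\le\frac{\alpha-1}{8\theta}a_k=\frac{\alpha-1}{4(3-\alpha)}a_k<\frac{\alpha-1}{2(3-\alpha)}a_k<\frac12a_k$, the last inequality because $\alpha<2$.

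Summing (a) over $k=1,\dots,n$ and using $\sum_{k=1}^na_k=(n+\theta)^{2-\alpha}-\theta^{2-\alpha}<(n+\theta)^{2-\alpha}=\sum_{k=0}^na_k$ gives (b). For (c) with $n\ge1$, part (a) gives $c_n^{(n+1)}=a_n-b_n>\frac12a_n$, and $a_n=(2-\alpha)\int_{n-1+\theta}^{n+\theta}s^{1-\alpha}\,ds>(2-\alpha)(n+\theta)^{1-\alpha}$ because $s^{1-\alpha}$ is decreasing; for $n=0$ the asserted inequality $c_0^{(1)}=\theta^{2-\alpha}>\frac{2-\alpha}{2}\theta^{1-\alpha}$ is just $\theta>\frac{2-\alpha}{2}$.

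Parts (f) and (g) are bookkeeping: substituting the three cases of the definition of $c_m^{(n+1)}$ and telescoping the $b$-terms gives $\sum_{k=0}^nc_k^{(n+1)}=\sum_{k=0}^na_k=(n+\theta)^{2-\alpha}$, while $\sum_{k=0}^nc_k^{(k+1)}=a_0+\sum_{k=1}^n(a_k-b_k)=\sum_{k=0}^na_k-\sum_{k=1}^nb_k$ gives the middle equality of (f), and (g) follows from comparing the defining formulas for the superscripts $n+1$ and $n$ term by term. For (e), since $2\theta-1=2-\alpha>0$ it suffices to show $(2-\alpha)c_0^{(n+1)}>\theta c_1^{(n+1)}$; inserting $c_0^{(n+1)}=a_0+b_1$ and $c_1^{(n+1)}$ (which equals $a_1+b_2-b_1$ for $n\ge2$ and $a_1-b_1$ for $n=1$), the claim reduces to
$$\big[(2-\alpha+\theta)\theta^{2-\alpha}-\theta(1+\theta)^{2-\alpha}\big]+\big[(2-\alpha+\theta)b_1-\theta b_2\big]>0$$
(with $b_2$ absent when $n=1$). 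The first bracket is $\ge0$ by Bernoulli's inequality $(1+\frac1\theta)^{2-\alpha}\le1+\frac{2-\alpha}{\theta}$ (valid since $2-\alpha\in(0,1)$; multiply by $\theta^{3-\alpha}$), and the second is $\ge(2-\alpha)b_1>0$ because $b_2\le b_1$. Since $c_0^{(n+1)}=a_0+b_1>0$, (e) also yields $c_1^{(n+1)}<\frac{2-\alpha}{\theta}c_0^{(n+1)}<c_0^{(n+1)}$, as $\frac{2-\alpha}{\theta}=\frac{2(2-\alpha)}{3-\alpha}<1$ for $\alpha>1$.

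The substantive part is the monotonicity (d). The key point is that for $1\le m\le n-1$, substituting $c_m^{(n+1)}=a_m+b_{m+1}-b_m$ and simplifying gives
$$c_m^{(n+1)}=-\tfrac12\big[f(m+1+\theta)-2f(m+\theta)+f(m-1+\theta)\big]+\int_0^1\big[f(m+\theta+u)-f(m-1+\theta+u)\big]\,du=:E_m,$$
where the first summand is positive (concavity of $f$) and decreasing in $m$ (monotonicity of $|f''|$), and the second is positive ($f'>0$) and decreasing in $m$ ($f'$ decreasing); hence $c_1^{(n+1)}>c_2^{(n+1)}>\cdots>c_{n-1}^{(n+1)}$. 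Moreover $c_n^{(n+1)}=a_n-b_n=E_n-b_{n+1}$, where $E_n$ is the $m=n$ instance of the displayed formula, so for $n\ge2$ we get $c_n^{(n+1)}<E_n<E_{n-1}=c_{n-1}^{(n+1)}$ (the first inequality since $b_{n+1}>0$, the second by monotonicity of $m\mapsto E_m$), while $c_0^{(n+1)}>c_1^{(n+1)}$ is exactly the consequence of (e) noted above (this also covers $n=1$). Finally (h) follows from (c) and (d): $\sum_{k=0}^n\frac1{c_k^{(n+1)}}\le\frac{n+1}{c_n^{(n+1)}}<\frac{2(n+1)(n+\theta)^{\alpha-1}}{2-\alpha}<\frac{2(n+1)^\alpha}{2-\alpha}$, using $n+\theta<n+1$ and $\alpha-1>0$. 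The only steps needing real care are selecting these integral representations (the Peano form for $b_k$ and the splitting $c_m^{(n+1)}=E_m$ into two positive monotone sequences), the one-line Bernoulli estimate for (e), and the separate handling of the end coefficients $c_0^{(n+1)}$ and $c_n^{(n+1)}$, which do not fit the interior pattern; the remainder is telescoping together with the signs and monotonicity of $f$, $f'$, $f''$.
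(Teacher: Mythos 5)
Your proof is correct, and it is genuinely more self-contained than the paper's. The paper does not actually prove items (a)--(g): it only cites Lemmas 2.1 and 2.2 of Liao--Zhao--Teng with the parameter substitution $\sigma=\frac{2-\alpha}{2}\mapsto\theta=\frac{3-\alpha}{2}$, and then derives only (h) in-house from (c) and (d) via $\sum_k 1/c_k^{(n+1)}\le (n+1)/c_n^{(n+1)}$ --- which is exactly your argument for (h). For the remaining items you supply a complete derivation from integral representations of $f(s)=s^{2-\alpha}$: the Peano-kernel (trapezoidal-error) form of $b_k$ gives positivity, monotonicity, and in fact the sharper bound $b_k\le\frac{\alpha-1}{4(3-\alpha)}a_k$ (stronger than the stated $\frac{\alpha-1}{2(3-\alpha)}$); the splitting $c_m^{(n+1)}=E_m$ into a second-difference term and a first-difference term, each positive and decreasing, cleanly handles the interior monotonicity in (d), with the end coefficients $c_0^{(n+1)}$ and $c_n^{(n+1)}$ treated separately (via (e) with Bernoulli's inequality, and via $c_n^{(n+1)}=E_n-b_{n+1}$, respectively); (f) and (g) are telescoping as you say. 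I checked the algebraic identity $a_m+b_{m+1}-b_m=E_m$ and the sign/monotonicity claims for $f'$, $f''$, and everything is in order. What your route buys is a verifiable, reference-free proof (and a slightly sharper constant in (a)); what the paper's route buys is brevity by delegating to the $\sigma$-weighted results already established in the cited works.
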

\begin{proof}
The inequalities $(a)$--$(g)$ are obtained directly by replacing $\sigma=\frac{2-\alpha}{2}$ $(0<\alpha<1)$ in Lemma 2.1 and Lemma 2.2 of \cite{Liao0} with $\theta=\frac{3-\alpha}{2}$ $(1<\alpha<2)$, which also can be found in Lemma 2.1 and Lemma 2.2 of \cite{Liao2}. Using $(c)$--$(d)$, we have
$$\sum_{k=0}^n\frac1{c_k^{(n+1)}}<\sum_{k=0}^n\frac1{c_n^{(n+1)}}<\frac{2(n+1){(n+\theta)}^{\alpha-1}}{2-\alpha}
<\frac{2{(n+1)}^\alpha}{2-\alpha},$$
thus $(h)$ is verified.
\end{proof}

Denote $\mu=\tau^{\alpha-1} \Gamma(3-\alpha)$  and
\begin{equation}\label{dkn}
d_0^{(1)}=\frac{c_0^{(1)}}{\mu}=\frac{\theta^{2-\alpha}\tau^{1-\alpha}}{\Gamma(3-\alpha)}; \quad\quad d_k^{(n+1)}=\left\{\begin{array}{ll}
\frac{c_k^{(n+1)}}{\mu}, \quad &0\leq k\leq n-1,\\
\frac{c_n^{(n+1)}-\frac{\theta}{1-\theta}b_n}{\mu}, &k=n,
\end{array}\right. \quad n\geq 1.
\end{equation}
We further have the following lemma.
\begin{lemma}\label{dk}
The above coefficients $d_{k}^{(n+1)}$ $(0\leq k\leq n\leq N-1,~n\geq 1)$ satisfy
\begin{align*}
&(a)\quad \frac{(2-\alpha)^2t_{n+\theta}^{1-\alpha}}{\Gamma(4-\alpha)}<d_n^{(n+1)}<\frac{t_{n-1+\theta}^{1-\alpha}}{\Gamma(2-\alpha)},      \quad \quad (b) \quad d_0^{(n+1)}>d_1^{(n+1)}>\cdots>d_{n-1}^{(n+1)}>d_{n}^{(n+1)}, \\
&(c) \quad(2\theta-1)d_0^{(n+1)}-\theta d_1^{(n+1)}>0,\quad \quad
(d) \quad\tau \sum_{k=0}^{n}d_k^{(n+1)}<\frac{t_{n+\theta}^{2-\alpha}}{\Gamma(3-\alpha)},\\
 &(e) \quad \tau \sum_{k=0}^{n}d_k^{(k+1)}<\frac{t_{n+\theta}^{2-\alpha}}{\Gamma(3-\alpha)},
 \quad\quad (f) \quad \tau\sum_{k=0}^n \frac1{d_k^{(k+1)}}< \frac{\Gamma(4-\alpha)T^{\alpha}}{(2-\alpha)^2} .
\end{align*}
\end{lemma}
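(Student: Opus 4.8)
The plan is to derive everything from the definition \eqref{dkn} of $d_k^{(n+1)}$ together with the already-proved estimates on $a_l$, $b_l$ and $c_m^{(n+1)}$ in Lemmas \ref{lemma_time} and \ref{akbk}; the only genuinely new content lies in parts (a) and (f), and the rest follow by summation. First I would record the key rewriting of the diagonal term: since $c_n^{(n+1)}=a_n-b_n$ for $n\ge1$ and $\theta=\frac{3-\alpha}{2}$ makes $1-\theta=\frac{\alpha-1}{2}$, the correction $-\frac{\theta}{1-\theta}b_n$ in \eqref{dkn} is tuned so that
\[
 d_n^{(n+1)}=\frac{1}{\mu}\Big(a_n-\frac{1}{1-\theta}b_n\Big)=\frac{1}{\mu}\Big(a_n-\frac{2}{\alpha-1}b_n\Big),\qquad n\ge1,
\]
with $d_0^{(1)}=a_0/\mu=\theta^{2-\alpha}/\mu$. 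Part (a) then follows from two elementary facts: the mean value theorem applied to $a_n=(n+\theta)^{2-\alpha}-(n-1+\theta)^{2-\alpha}$ gives $(2-\alpha)(n+\theta)^{1-\alpha}<a_n<(2-\alpha)(n-1+\theta)^{1-\alpha}$, and Lemma \ref{akbk}(a) gives $\frac{2}{\alpha-1}b_n<\frac{1}{3-\alpha}a_n$, hence $\frac{2-\alpha}{3-\alpha}a_n<a_n-\frac{2}{\alpha-1}b_n<a_n$. Substituting these, using $\mu=\tau^{\alpha-1}\Gamma(3-\alpha)$, $(2-\alpha)\Gamma(2-\alpha)=\Gamma(3-\alpha)$ and $(3-\alpha)\Gamma(3-\alpha)=\Gamma(4-\alpha)$, and absorbing $\tau^{1-\alpha}$ into the grid points $t_{n\pm1+\theta}$, yields exactly the two-sided bound in (a). I would also check the case $k=0$ directly, verifying $d_0^{(1)}>\frac{(2-\alpha)^2 t_\theta^{1-\alpha}}{\Gamma(4-\alpha)}$, which reduces to the scalar inequality $(3-\alpha)^2>2(2-\alpha)^2$; this is needed in (f).

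Parts (b)--(e) are then quick. For (b), monotonicity among $d_0^{(n+1)},\dots,d_{n-1}^{(n+1)}$ is inherited from Lemma \ref{akbk}(d) since these equal $c_k^{(n+1)}/\mu$, and $d_{n-1}^{(n+1)}=c_{n-1}^{(n+1)}/\mu>c_n^{(n+1)}/\mu>d_n^{(n+1)}$ by Lemma \ref{akbk}(d) and the positivity of the correction term. For (c), when $n\ge2$ both $d_0^{(n+1)}$ and $d_1^{(n+1)}$ equal $c_0^{(n+1)}/\mu$ and $c_1^{(n+1)}/\mu$ respectively, so Lemma \ref{akbk}(e) applies directly; for $n=1$ the omitted correction only adds a positive multiple of $b_1$. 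For (d) and (e) I would sum the definition and invoke the two identities in Lemma \ref{akbk}(f), namely $\sum_{k=0}^n c_k^{(n+1)}=(n+\theta)^{2-\alpha}$ and $\sum_{k=0}^n c_k^{(k+1)}=(n+\theta)^{2-\alpha}-\sum_{k=1}^n b_k$: in both cases the quantity subtracted from $(n+\theta)^{2-\alpha}$ is positive, so $\tau\sum_k d_k^{(n+1)}$ and $\tau\sum_k d_k^{(k+1)}$ are each bounded above by $\tau(n+\theta)^{2-\alpha}/\mu=t_{n+\theta}^{2-\alpha}/\Gamma(3-\alpha)$.

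For (f), using the lower bound in (a) (valid also for $k=0$ by the direct check above), $\frac{1}{d_k^{(k+1)}}<\frac{\Gamma(4-\alpha)}{(2-\alpha)^2}t_{k+\theta}^{\alpha-1}=\frac{\Gamma(4-\alpha)}{(2-\alpha)^2}(k+\theta)^{\alpha-1}\tau^{\alpha-1}$, so that $\tau\sum_{k=0}^n\frac{1}{d_k^{(k+1)}}<\frac{\Gamma(4-\alpha)}{(2-\alpha)^2}\,\tau^{\alpha}\sum_{k=0}^n(k+\theta)^{\alpha-1}$. It remains to show $\tau^{\alpha}\sum_{k=0}^n(k+\theta)^{\alpha-1}\le T^\alpha$. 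The naive integral comparison $\sum_k(k+\theta)^{\alpha-1}\le\int_0^{n+1}(x+\theta)^{\alpha-1}\,dx$ is slightly too weak (it gives $(n+1+\theta)^\alpha/\alpha$ rather than $(n+1)^\alpha$), so instead I would use a crude termwise estimate: since $\theta<1$ and $\alpha-1\in(0,1)$, $(k+\theta)^{\alpha-1}<(k+1)^{\alpha-1}\le(n+1)^{\alpha-1}$ for $0\le k\le n$, whence $\sum_{k=0}^n(k+\theta)^{\alpha-1}<(n+1)^\alpha\le N^\alpha$ because $n\le N-1$, and $\tau^\alpha N^\alpha=T^\alpha$.

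The argument has no deep step; the two points requiring care are (i) recognizing that the $-\frac{\theta}{1-\theta}b_n$ correction in \eqref{dkn} is designed precisely so that $d_n^{(n+1)}=\frac1\mu(a_n-\frac2{\alpha-1}b_n)$, which is what makes the sharp lower bound in (a)---and therefore (f)---go through; and (ii) in (f), avoiding an integral bound in favour of the termwise estimate $(k+\theta)^{\alpha-1}\le(n+1)^{\alpha-1}$, which is just tight enough to reproduce the constant $\Gamma(4-\alpha)T^\alpha/(2-\alpha)^2$.
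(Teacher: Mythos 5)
Your proposal is correct and follows essentially the same route as the paper: rewrite $d_n^{(n+1)}=\frac1\mu\big(a_n-\frac1{1-\theta}b_n\big)$, squeeze it between $\frac{2-\alpha}{3-\alpha}a_n/\mu$ and $a_n/\mu$ via Lemma \ref{akbk}(a) to get (a), deduce (b)--(e) from Lemma \ref{akbk}(d),(e),(f) and the positivity of the $b_n$-correction, and prove (f) by the same termwise bound $t_{k+\theta}^{\alpha-1}\le t_{n+1}^{\alpha-1}$ (including the separate check $\theta(3-\alpha)>(2-\alpha)^2$ for the $k=0$ term). The only cosmetic differences are using the mean value theorem instead of the integral representation of $a_n$ in (a), and deriving (e) from the identity $\sum_k c_k^{(k+1)}=(n+\theta)^{2-\alpha}-\sum_k b_k$ rather than from $d_k^{(k+1)}<a_k/\mu$; both are equivalent one-line variants.
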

\begin{proof}
Applying Lemma \ref{akbk}(a), we get
\begin{align*}
&d_n^{(n+1)}=\frac{a_n-\frac1{1-\theta}b_n}{\mu}< \frac{a_n}{\mu}=\frac{2-\alpha}{\mu}\int_0^1\frac{ds}{(n+\theta-s)^{\alpha-1}}<\frac{t_{n-1+\theta}^{1-\alpha}}{\Gamma(2-\alpha)},\\
&d_n^{(n+1)}=\frac{a_n-\frac1{1-\theta}b_n}{\mu}>\frac{2-\alpha}{(3-\alpha)\mu}a_n>
\frac{(2-\alpha)^2}{(3-\alpha)\mu}\int_0^1\frac{ds}{(n+\theta-s)^{\alpha-1}}>\frac{(2-\alpha)^2t_{n+\theta}^{1-\alpha}}{\Gamma(4-\alpha)}.
\end{align*}
So (a) is verified. Since $b_n>0$, it follows from Lemma \ref{akbk}(d) that $c_0^{(n+1)}>c_1^{(n+1)}>\cdots>c_{n-1}^{(n+1)}>c_{n}^{(n+1)}-b_n$, then the definition \eqref{dkn} yields the inequality (b). Similarly, Lemma \ref{akbk}(e) implies that
$$(2\theta-1)d_0^{(n+1)}-\theta d_1^{(n+1)}=\big[(2\theta-1)c_0^{(n+1)}-\theta c_1^{(n+1)} \big]/\mu>0,~n\geq 2;$$
and $(2\theta-1)d_0^{(2)}-\theta d_1^{(2)}=\big[(2\theta-1)c_0^{(2)}-\theta c_1^{(2)} \big]/\mu>0$. Thus (c) is verified. The definition \eqref{dkn} also implies $\tau\sum_{k=0}^n d_k^{(n+1)}<\frac{\tau}{\mu}\sum_{k=0}^n c_k^{(n+1)}$ such that the inequality (d) is obtained by using Lemma \ref{akbk}(f). The proof of (a) shows that $d_k^{(k+1)}<\frac{a_k}{\mu}$ for $k\geq 1$. Then Lemma \ref{akbk}(b) yields the inequality (e).
It is easy to check that $\frac{\theta}{\Gamma(3-\alpha)}>\frac{(2-\alpha)^2}{\Gamma(4-\alpha)}$ for $1<\alpha<2$, then $d_0^{(1)}=\frac{\theta t_\theta^{1-\alpha}}{\Gamma(3-\alpha)}>\frac{(2-\alpha)^2t_\theta^{1-\alpha}}{\Gamma(4-\alpha)}$, and it follows by combining (a) that
$$\tau\sum_{k=0}^n \frac1{d_k^{(k+1)}}<\tau\sum_{k=0}^n\frac{\Gamma(4-\alpha)t_{n+\theta}^{\alpha-1}}{(2-\alpha)^2}<
\frac{\Gamma(4-\alpha)t_{n+1}^{\alpha}}{(2-\alpha)^2}\leq \frac{\Gamma(4-\alpha)T^{\alpha}}{(2-\alpha)^2} ,$$
so inequality (f) is verified.
\end{proof}
We have the following lemma relating solution values at different points.
\begin{lemma}\label{time}
For any $g(t)\in{\cal C}^2[t_{n-1+\theta},t_{n+\theta}]$, it holds that
$$g(t_n)=(1-\theta)g(t_{n+\theta})+\theta g(t_{n-1+\theta})+{\cal O}(\tau^2).$$
\end{lemma}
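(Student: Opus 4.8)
The statement is a one-point Taylor (equivalently, linear interpolation) estimate, so the plan is entirely elementary. First I would record the elementary identities relating the three abscissae: since $t_{n+\theta}=(n+\theta)\tau$, $t_{n-1+\theta}=(n-1+\theta)\tau$ and $t_n=n\tau$ with $\theta\in[0,1]$, we have $t_{n-1+\theta}\le t_n\le t_{n+\theta}$ and, more precisely,
\[
t_n-t_{n+\theta}=-\theta\tau,\qquad t_n-t_{n-1+\theta}=(1-\theta)\tau .
\]
In particular both displacements are $\mathcal{O}(\tau)$ and $t_n$ lies in the interval on which $g\in\mathcal{C}^2$.

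Next I would Taylor expand $g$ about $t_n$ with Lagrange remainder, which is legitimate because $g\in\mathcal{C}^2[t_{n-1+\theta},t_{n+\theta}]$: there exist $\xi_1,\xi_2$ in this interval with
\[
g(t_{n+\theta})=g(t_n)+\theta\tau\,g'(t_n)+\tfrac12\theta^2\tau^2 g''(\xi_1),\qquad
g(t_{n-1+\theta})=g(t_n)-(1-\theta)\tau\,g'(t_n)+\tfrac12(1-\theta)^2\tau^2 g''(\xi_2).
\]
Forming the weighted combination $(1-\theta)g(t_{n+\theta})+\theta g(t_{n-1+\theta})$, the zeroth-order terms give $(1-\theta)g(t_n)+\theta g(t_n)=g(t_n)$, and the first-order terms cancel exactly because $(1-\theta)\theta\tau-\theta(1-\theta)\tau=0$. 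What remains is
\[
(1-\theta)g(t_{n+\theta})+\theta g(t_{n-1+\theta})-g(t_n)=\tfrac12\tau^2\big[(1-\theta)\theta^2 g''(\xi_1)+\theta(1-\theta)^2 g''(\xi_2)\big],
\]
and since $g''$ is continuous on the compact interval $[t_{n-1+\theta},t_{n+\theta}]$ it is bounded there, while $\theta(1-\theta)\le\tfrac14$; hence the right-hand side is $\mathcal{O}(\tau^2)$, which is the claim. (Equivalently, one may invoke the standard linear-interpolation error formula: the interpolant of $g$ through $(t_{n-1+\theta},t_{n+\theta})$ has value $(1-\theta)g(t_{n+\theta})+\theta g(t_{n-1+\theta})$ at $t_n$, with error $\tfrac12 g''(\xi)(t_n-t_{n-1+\theta})(t_n-t_{n+\theta})=-\tfrac12\theta(1-\theta)\tau^2 g''(\xi)$.)

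There is essentially no obstacle here; the only point worth stating carefully is that the implied constant in $\mathcal{O}(\tau^2)$ is $\tfrac12\theta(1-\theta)\max_{[t_{n-1+\theta},t_{n+\theta}]}|g''|\le\tfrac18\max|g''|$, so it is uniform once $g''$ is controlled globally on $[0,T]$, as will be the case when this lemma is applied to the exact solution. If one prefers to track dependence on $n$, nothing changes since $\theta(1-\theta)$ and the length $\tau$ of the subinterval do not depend on $n$.
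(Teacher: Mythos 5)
Your proof is correct and follows essentially the same route as the paper's: Taylor-expand $g(t_{n+\theta})$ and $g(t_{n-1+\theta})$ about $t_n$, observe that the weights $(1-\theta)$ and $\theta$ cancel the first-order terms, and bound the second-order remainder by the continuity of $g''$ on the compact interval. The only cosmetic difference is that you use the Lagrange form of the remainder where the paper uses the integral form; both yield the same explicit constant $\tfrac12\theta(1-\theta)\max|g''|$.
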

\begin{proof}
Using Taylor formula with integral remainder, we have
\begin{align*}
&g(t_{n+\theta})=g(t_n)+\theta\tau g'(t_n)+\theta^2\tau^2 \int_0^1 g''(t_n+\rho\theta\tau)(1-\rho)d\rho,\\
&g(t_{n-1+\theta})=g(t_n)-(1-\theta)\tau g'(t_n)+(1-\theta)^2\tau^2 \int_0^1 g''\big(t_n-\rho(1-\theta)\tau\big)(1-\rho)d\rho.
\end{align*}
Then the desired result can be obtained by a direct calculation.
\end{proof}

\subsection{Weighted approximation to time fractional derivative}
Denote $v=u_t$, then $^C_0D_t^{\alpha} u=\frac1{\Gamma(2-\alpha)}\int_0^{t}\frac{v_t(s)}{(t-s)^{\alpha-1}}ds$.
 We define the grid functions
$$U_i^n=u(x_i,t_n),\mbox{ and }  ~ V_i^{n+\sigma}=v(x_i,t_{n+\sigma})~ \mbox{ for }~ 0\leq\sigma\leq 1 ,~ 0\leq i\leq M, ~ 0\leq n\leq N.$$
Consider equation \eqref{eq1} at the point $(x_i,t_n)$, we have
\begin{align}\label{eeq1}
^C_0D_t^{\alpha} u(x_i,t_n)=\frac{\partial^2 u(x_i,t_n)}{\partial x_i^2}-f\big(u(x_i,t_n)\big)+p(x_i,t_n), \quad  1\leq i\leq M-1, ~ 1\leq n\leq N.
\end{align}
Utilizing Lemma \ref{time} and Lemma \ref{lemma_time}, we introduce a weighted approximation for the time fractional derivative. Let $^C_0D_t^{\alpha} u(x_i,t)\in{\cal C}^2[t_{n-1+\theta},t_{n+\theta}]$ and $v(x_i,t)\in{\cal C}^2[0,T]\cap{\cal C}^3(0,T]$, $1\leq n\leq N-1$, it follows that
\begin{align}\nonumber
^C_0D_t^{\alpha} u(x_i,t_n)=&(1-\theta) ^C_0D_t^{\alpha} u(x_i,t_{n+\theta})+\theta ^C_0D_t^{\alpha} u(x_i,t_{n-1+\theta})+{\cal O}(\tau^2),\\\label{Rhat}
=&(1-\theta) \Delta_t^\alpha  V_i^{n+\theta}+\theta \Delta_t^\alpha  V_i^{n-1+\theta}+({\hat R}_t)_i^n,
\end{align}
where $({\hat R}_t)_i^n={\cal O}(\tau^2)$.

Denote $V_i^{k+1-\theta}=(1-\theta)V_i^{k+1}+\theta V_i^k$ ($k\geq0$). Then, for $n\geq1$, using Lemma \ref{akbk}(g) and \eqref{dkn}, we have
\begin{align}\nonumber
(1-\theta) \Delta_t^\alpha  V_i^{n+\theta}+\theta \Delta_t^\alpha  V_i^{n-1+\theta}
=& \frac{(1-\theta)}{\mu} \sum_{k=0}^n c_{n-k}^{(n+1)}(V_i^{k+1}-V_i^k) + \frac{\theta}{\mu} \sum_{k=0}^{n-1} c_{n-k-1}^{(n)}(V_i^{k+1}-V_i^k) \\\nonumber
=& \frac{(1-\theta)}{\mu} \sum_{k=0}^n c_{n-k}^{(n+1)}(V_i^{k+1}-V_i^k) + \frac{\theta}{\mu} \sum_{k=0}^{n-1} c_{n-k-1}^{(n+1)}(V_i^{k+1}-V_i^k)\\\nonumber
&- \frac{\theta}{\mu} b_n (V_i^1-V_i^0) \\\nonumber
=& \frac{(1-\theta)}{\mu} \sum_{k=1}^n c_{n-k}^{(n+1)}(V_i^{k+1}-V_i^k) + \frac{\theta}{\mu} \sum_{k=1}^{n} c_{n-k}^{(n+1)}(V_i^k-V_i^{k-1})\\\nonumber
&+\frac{(1-\theta)}{\mu}c_n^{(n+1)}(V_i^1-V_i^0)- \frac{\theta}{\mu} b_n (V_i^1-V_i^0) \\\nonumber
=& \frac1{\mu} \sum_{k=1}^n c_{n-k}^{(n+1)} (V_i^{k+1-\theta}-V_i^{k-\theta})\\\nonumber
&+\frac1{\mu}\big[(1-\theta)c_n^{(n+1)}-\theta b_n\big](V_i^1-V_i^0)\\\label{timedisc}
=& \sum_{k=1}^n d_{n-k}^{(n+1)}(V_i^{k+1-\theta}-V_i^{k-\theta})+ d_n^{(n+1)}(V_i^{1-\theta}-V_i^0).
\end{align}
If $n=0$,
\begin{align}\label{timedisc0}
\Delta_t^\alpha V_i^\theta=\frac{d_0^{(1)}}{1-\theta}(V_i^{1-\theta}-V_i^0).
\end{align}
For $k\geq 1$, Taylor expansion gives
\begin{align}\nonumber
V_i^{k+1-\theta}=&(2-2\theta)V_i^{k+\frac12}+(2\theta-1)V_i^k+(2-2\theta)(R_{v})_i^{k+\frac12}\\\nonumber
=&(2-2\theta)\delta_tU_i^{k+\frac12}+(2\theta-1)\delta_{\hat t}U_i^{k}-(2-2\theta)(R_t)_i^{k+\frac12}-(2\theta-1)(R_{\hat t})_i^{k}\\\label{vktheta}
&+(2-2\theta)(R_{v})_i^{k+\frac12},
\end{align}
and
\begin{align}\nonumber
V_i^{1-\theta}&=(2-2\theta)V_i^{\frac12}+(2\theta-1)V_i^0+(2-2\theta)(R_{v})_i^{\frac12}\\\label{v01theta}
&=(2-2\theta)\delta_tU_i^{\frac12}+(2\theta-1)\psi_i-(2-2\theta)(R_t)_i^{\frac12}+(2-2\theta)(R_{v})_i^{\frac12},
\end{align}
where
$$(R_{v})_i^{k+\frac12}=\frac{\tau^2}{8}\int_0^1\Big[\frac{\partial^3u(x_i,t_{k+\frac12}-\frac{\rho\tau}{2})}{\partial t^3}+\frac{\partial^3u(x_i,t_{k+\frac12}+\frac{\rho\tau}{2})}{\partial t^3} \Big](1-\rho) d\rho \mbox{ for } k\geq 0,$$
$$(R_t)_i^{k+\frac12}=\frac{\tau^2}{16}\int_0^1\Big[\frac{\partial^3u(x_i,t_{k+\frac12}-\frac{\rho\tau}{2})}{\partial t^3}+\frac{\partial^3u(x_i,t_{k+\frac12}+\frac{\rho\tau}{2})}{\partial t^3} \Big](1-\rho)^2 d\rho \mbox{ for } k\geq 0,$$
and
$$(R_{\hat t})_i^{k}=\frac{\tau^2}{4}\int_0^1\Big[\frac{\partial^3 u(x_i,t_{k}-\rho\tau)}{\partial t^3}+\frac{\partial^3 u(x_i,t_k+\rho\tau)}{\partial t^3} \Big](1-\rho)^2 d\rho \mbox{ for } k\geq 1.$$
So, by inserting \eqref{vktheta} and \eqref{v01theta} into \eqref{timedisc}, we can get the following approximation for the fractional derivative on grid function $U_i^n$
\begin{align}\nonumber
&(1-\theta) \Delta_t^\alpha  V_i^{n+\theta}+\theta \Delta_t^\alpha  V_i^{n-1+\theta}\\\nonumber
=&\sum_{k=1}^n d_{n-k}^{(n+1)}(V_i^{k+1-\theta}-V_i^{k-\theta})+ d_n^{(n+1)}(V_i^{1-\theta}-V_i^0)\\\nonumber
=&\sum_{k=1}^n d_{n-k}^{(n+1)}\big[(2-2\theta)\big(\delta_tU_i^{k+\frac12}-\delta_tU_i^{k-\frac12}\big)+(2\theta-1)\big(\delta_{\hat t}U_i^{k}-\delta_{\hat t}U_i^{k-1} \big)\big]\\\label{Rtilde}
&+d_n^{(n+1)}\big[(2-2\theta)\delta_tU_i^{\frac12}+(2\theta-1)\psi_i-V_i^0 \big]-({\tilde R}_t)_i^{n+1},\quad 1\leq n\leq N-1,~1\leq i\leq M-1,
\end{align}
where
\begin{align*}
({\tilde R}_t)_i^{n+1} =&\sum_{k=1}^n d_{n-k}^{(n+1)}\big[(2-2\theta)\big((R_t)_i^{k+\frac12}-(R_t)_i^{k-\frac12}\big)+(2\theta-1)\big((R_{\hat t})_i^{k}-(R_{\hat t})_i^{k-1}\big)\\
&-(2-2\theta)\big((R_{v})_i^{k+\frac12}-(R_{v})_i^{k-\frac12}\big)\big]+(2-2\theta)d_n^{(n+1)}\big[(R_t)_i^{\frac12}-(R_{v})_i^{\frac12}\big].
\end{align*}
Utilizing the Taylor expansion, it is observed that
\begin{align*}
(R_t)_i^{k+\frac12}-(R_t)_i^{k-\frac12}=&\frac{\tau^3}{16}\int_0^1\int_0^1\frac{\partial^4 u(x_i,t_{k+\frac12}+(\varrho-\frac{\rho}{2})\tau)}{\partial t^4}(1-\rho)^2d\varrho d\rho\\
&+\frac{\tau^3}{16}\int_0^1\int_0^1\frac{\partial^4 u(x_i,t_{k+\frac12}+(\varrho+\frac{\rho}{2})\tau)}{\partial t^4}(1-\rho)^2d\varrho d\rho,
\\
(R_{\hat t})_i^{k}-(R_{\hat t})_i^{k-1}=&\frac{\tau^3}{4}\int_0^1\int_0^1\frac{\partial^4 u(x_i,t_{k-1}+(\varrho-\rho)\tau)}{\partial t^4}(1-\rho)^2d\varrho d\rho\\
&+\frac{\tau^3}{4}\int_0^1\int_0^1\frac{\partial^4 u(x_i,t_{k-1}+(\varrho+\rho)\tau)}{\partial t^4}(1-\rho)^2d\varrho d\rho,
\\
(R_{v})_i^{k+\frac12}-(R_{v})_i^{k-\frac12}=&\frac{\tau^3}{8}\int_0^1\int_0^1\frac{\partial^4 u(x_i,t_{k+\frac12}+(\varrho-\frac{\rho}{2})\tau)}{\partial t^4}(1-\rho)d\varrho d\rho\\
&+\frac{\tau^3}{8}\int_0^1\int_0^1\frac{\partial^4 u(x_i,t_{k+\frac12}+(\varrho+\frac{\rho}{2})\tau)}{\partial t^4}(1-\rho)d\varrho d\rho, \quad 1\leq k\leq n.
\end{align*}
These three equations above yield
\begin{align*}
|(R_t)_i^{k+\frac12}-(R_t)_i^{k-\frac12}|\leq C_1 \tau^3,~|(R_{\hat t})_i^{k}-(R_{\hat t})_i^{k-1}|\leq C_1 \tau^3,~ |(R_{v})_i^{k+\frac12}-(R_{v})_i^{k-\frac12}|\leq C_1\tau^3,
\end{align*}
where $C_1$ is a positive constant.

Then, suppose $\frac{\partial^4 u}{\partial t^4}$ is continuous on $[a,b]\times[0,T]$, we can obtain
\begin{align*}
|({\tilde R}_t)_i^{n+1}|
 \leq & (2-2\theta)\sum_{k=1}^n d_{n-k}^{(n+1)}|(R_t)_i^{k+\frac12}-(R_t)_i^{k-\frac12}|+(2\theta-1)\sum_{k=1}^n d_{n-k}^{(n+1)}|(R_{\hat t})_i^{k}-(R_{\hat t})_i^{k-1}|\\
 & +(2-2\theta)\sum_{k=1}^n d_{n-k}^{(n+1)}|(R_{v})_i^{k+\frac12}-(R_{v})_i^{k-\frac12}|+(2-2\theta)d_n^{(n+1)}\big(|(R_t)_i^{\frac12}|+|(R_{v})_i^{\frac12}|\big)\\
 \leq & (2-2\theta)C_1\sum_{k=1}^n d_{n-k}^{(n+1)}\tau^3+(2\theta-1)C_1\sum_{k=1}^n d_{n-k}^{(n+1)}\tau^3+ C_2d_n^{(n+1)}\tau^2 \\
 \leq&(2-2\theta)\frac{C_1t_{n+\theta}^{2-\alpha}}{\Gamma(3-\alpha)}\tau^2+(2\theta-1)\frac{C_1t_{n+\theta}^{2-\alpha}}{\Gamma(3-\alpha)}\tau^2+ \frac{C_2t_{n-1+\theta}^{1-\alpha}}{\Gamma(2-\alpha)}\tau^2 \\
 \leq & C_3 \tau^2,
\end{align*}
where Lemma \ref{dk}(a),(d) have been used, and $C_2,C_3$ are positive constants.

\subsection{Approximation on first time level and space discretization}
Note that the discretization \eqref{Rtilde} is devoted to solving the numerical solutions of $U^{n+1}$ ($n\geq 1$). For the approximation on the first time level, further construction is required.
Inserting \eqref{v01theta} into \eqref{timedisc0}, we have
\begin{align}\label{Rttheta}
\Delta_t^\alpha V_i^\theta=\frac{d_0^{(1)}}{1-\theta}(2-2\theta)\big(\delta_t U_i^{\frac12}-\psi_i \big)-({\tilde R}_t)_i^\theta,
\end{align}
in which
\begin{align*}
({\tilde R}_t)_i^\theta=\frac{d_0^{(1)}}{1-\theta}(2-2\theta)\big[(R_t)_i^{\frac12}-(R_{v})_i^{\frac12}\big]
=\frac{2\theta^{2-\alpha}\tau^{1-\alpha}}{\Gamma(3-\alpha)}\big[(R_t)_i^{\frac12}-(R_{v})_i^{\frac12}\big],
\end{align*}
hence we have $|({\tilde R}_t)_i^\theta|\leq \frac{2\theta^{2-\alpha}\tau^{1-\alpha}}{\Gamma(3-\alpha)}\big(|(R_t)_i^{\frac12}|+|(R_{v})_i^{\frac12}|\big)\leq C_3\tau^{3-\alpha}$.

We can use the Taylor formula to get
\begin{align}\label{rr1}
u(x_i,t_\theta)=u(x_i,t_0)+\theta\tau u_t(x_i,t_0)+{\cal O}(\tau^2)=\varphi_i+\theta\tau\psi_i+{\cal O}(\tau^2), \quad 1\leq i\leq M-1.
\end{align}
Consider equation \eqref{eq1} at the point $(x_i,t_\theta)$, we have
\begin{align}\label{rr2}
^C_0D_t^{\alpha} u(x_i,t_\theta)=\frac{\partial^2 u(x_i,t_\theta)}{\partial x_i^2}-f\big(u(x_i,t_\theta)\big)+p(x_i,t_\theta), \quad  1\leq i\leq M-1.
\end{align}
Then combining Lemma \ref{lemma_time}, \eqref{rr1} and \eqref{rr2}, we obtain the approximation of first time level
\begin{align}\label{utheta}
\Delta_t^\alpha V_i^{\theta}=(\varphi_{xx}+\theta\tau\psi_{xx})_i- f(\varphi_i+\theta\tau\psi_i)
+p_i^\theta+({\hat R}_t)_i^\theta,\quad  1\leq i\leq M-1,
\end{align}
where $({\hat R}_t)_i^\theta={\cal O}(\tau^2)$.

For space discretization at each of grid points, suppose $\frac{\partial^4 u}{\partial x^4}$ is continuous on $[a,b]\times[0,T]$.
 The Taylor expansion gives
\begin{align} \label{rx}
^C_0D_t^{\alpha} u(x_i,t_n)=\delta_x^2U_i^n-f\big(u(x_i,t_n)\big)+p(x_i,t_n)+(R_x)_i^n, \quad  1\leq i\leq M-1, ~ 1\leq n\leq N,
\end{align}
where $(R_x)_i^n={\cal O}(h^2)$.

\subsection{The linearized scheme}
To construct a stable implicit difference scheme, we need one more Lemma on the approximation of $U_i^n$, which is obtained by using Taylor expansion. This lemma plays an important role in analysis in the next section, see the equality \eqref{ww}. It reads as:
\begin{lemma}\label{w}
For $n\geq 1$, it holds that
$$U_i^n=\frac{W_i^{n+1}+W_i^n}{2}+(R_{w})_i^n,$$
where
\begin{align*}
W_i^{n}=&(\frac32-\theta)\big[\theta U_i^{n}+(1-\theta)U_i^{n-1}\big]+(\theta-\frac12)\big[\theta
U_i^{n-1}+(1-\theta)U_i^{n-2}\big], \quad n\geq 2,
\\
W_i^{1}=&(\frac32-\theta)\big[\theta U_i^{1}+(1-\theta)U_i^{0}\big]+(\theta-\frac12)\big[\theta
U_i^{0}+(1-\theta)(U_i^1-2\tau\psi_i)\big],
\end{align*}
and $(R_{w})_i^n={\cal O}(\tau^2)$.
\end{lemma}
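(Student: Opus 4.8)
The plan is to derive the identity from Taylor's theorem together with a matching of the zeroth- and first-order coefficients. The weights $\frac32-\theta$ and $\theta-\frac12$ are chosen so that $W_i^n$ is a second-order-in-$\tau$ approximation of $u(x_i,t_{n-1/2})$: writing $P_i^k:=\theta U_i^k+(1-\theta)U_i^{k-1}$, Taylor expansion gives $P_i^k=u(x_i,t_{k-1+\theta})+{\cal O}(\tau^2)$ (the node $\theta t_k+(1-\theta)t_{k-1}$ being exactly $t_{k-1+\theta}$), hence for $n\ge2$ one has $W_i^n=(\frac32-\theta)P_i^n+(\theta-\frac12)P_i^{n-1}=(\frac32-\theta)u(x_i,t_{n-1+\theta})+(\theta-\frac12)u(x_i,t_{n-2+\theta})+{\cal O}(\tau^2)$, and since $(\frac32-\theta)t_{n-1+\theta}+(\theta-\frac12)t_{n-2+\theta}=t_{n-1/2}$ while the weights sum to $1$, this affine combination equals $u(x_i,t_{n-1/2})+{\cal O}(\tau^2)$. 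Then $\frac12(W_i^{n+1}+W_i^n)$ is the arithmetic mean of second-order approximations of $u(x_i,t_{n+1/2})$ and $u(x_i,t_{n-1/2})$, whose midpoint is $t_n$, hence a second-order approximation of $u(x_i,t_n)=U_i^n$, which is exactly the claim.

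To make the remainder precise for $n\ge2$ I would expand the four grid values $U_i^{n+1},U_i^n,U_i^{n-1},U_i^{n-2}$ that occur in $\frac12(W_i^{n+1}+W_i^n)$ directly about $t_n$ with integral remainder, $U_i^{n+j}=u(x_i,t_n)+j\tau u_t(x_i,t_n)+(j\tau)^2\int_0^1 u_{tt}(x_i,t_n+sj\tau)(1-s)\,ds$ for $j\in\{1,0,-1,-2\}$, and then collect terms. The coefficient of $u(x_i,t_n)$ is $1$ because the weights inside each $W$ sum to one, and the coefficient of $\tau u_t(x_i,t_n)$ is $0$ because the effective nodes of $W_i^{n+1}$ and $W_i^n$ are $t_{n+1/2}$ and $t_{n-1/2}$, symmetric about $t_n$; consequently $\frac12(W_i^{n+1}+W_i^n)-U_i^n$ is a bounded linear combination of the integral remainders and is ${\cal O}(\tau^2)$ whenever $u_{tt}$ is bounded on $[a,b]\times[0,T]$. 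This identifies $(R_w)_i^n$ in the case $n\ge2$.

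For $n=1$ the value $W_i^1$ is defined by the modified formula, and the only non-routine point is to see why the correction $-2\tau\psi_i$ is the right one: since $U_i^1-2\tau\psi_i=\varphi_i-\tau\psi_i+{\cal O}(\tau^2)$, this quantity reproduces, up to ${\cal O}(\tau^2)$, the contribution that the virtual term $U_i^{-1}=u(x_i,-\tau)$ would make in the generic definition of $W_i^1$ --- equivalently it is dictated by the centred difference $\psi_i=u_t(x_i,t_0)=\frac1{2\tau}\big(u(x_i,t_1)-u(x_i,t_{-1})\big)+{\cal O}(\tau^2)$. I would then repeat the expansion of $\frac12(W_i^2+W_i^1)$ about $t_1$, now using only the admissible data $U_i^0=\varphi_i$, $U_i^1$, $U_i^2$ and $\psi_i=u_t(x_i,t_1)-\tau u_{tt}(x_i,t_1)+{\cal O}(\tau^2)$; exactly as before the coefficient of $u(x_i,t_1)$ is $1$ and that of $\tau u_t(x_i,t_1)$ is $0$ (this cancellation is precisely where the $-2\tau\psi_i$ term is used), so $\frac12(W_i^2+W_i^1)-U_i^1={\cal O}(\tau^2)$. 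The main obstacle is this first-level bookkeeping --- making sure the correction at $t_0$ is exactly what kills the first-order term; the rest is routine Taylor estimation under the smoothness already assumed on $u$.
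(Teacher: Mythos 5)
Your argument is correct and is exactly the Taylor-expansion/moment-matching computation that the paper alludes to (the paper states the lemma with only the remark that it "is obtained by using Taylor expansion" and gives no further proof). Your verification that the weights sum to one, that the effective nodes of $W_i^{n+1}$ and $W_i^n$ are $t_{n+1/2}$ and $t_{n-1/2}$ (so the first-order terms cancel about $t_n$), and that $U_i^1-2\tau\psi_i$ plays the role of the virtual value $U_i^{-1}$ at the first level, checks out.
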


Therefore, by \eqref{Rhat}, \eqref{Rtilde}, \eqref{Rttheta}, \eqref{utheta}, \eqref{rx} and Lemma \ref{w}, one can approximate the equation \eqref{eq1}--\eqref{eq3} as following:
\begin{align}\nonumber
& (1-\theta) \Delta_t^\alpha  V_i^{n+\theta}+\theta \Delta_t^\alpha  V_i^{n-1+\theta}=\delta_x^2 \Big(\frac{W_i^{n+1}+W_i^n}{2}\Big)-f(U_i^n)+p_i^n+(R_x)_i^n-({\hat R}_t)_i^n+(R_{w})_i^n, \\\label{sch1}
&~~~~~~~~~~~~~~~~~~~~~~~~~~~~~~~~~~~~~~~~~~~~~~~~~~~~~~~~~~~~~~~1\leq n\leq N-1,~ 1\leq i\leq M-1,\\\label{sch2}
&\Delta_t^\alpha V_i^\theta=(\varphi_{xx}+\theta\tau\psi_{xx})_i-f(\varphi_i+\theta\tau\psi_i)
+p_i^\theta+({\hat R}_t)_i^\theta,~~~~~~~~~~ 1\leq i\leq M-1,  \\\label{sch3}
& U_0^n=U_M^n=0, \quad 1\leq n\leq N,\\\label{sch4}
& U_i^0=\varphi_i,\quad V_i^0=\psi_i,\quad  0\leq i\leq M,
\end{align}
in which
\begin{align}\nonumber
(1-\theta) \Delta_t^\alpha  V_i^{n+\theta}&+\theta \Delta_t^\alpha  V_i^{n-1+\theta}=\sum_{k=1}^n d_{n-k}^{(n+1)}\big[(2-2\theta)\big(\delta_tU_i^{k+\frac12}-\delta_tU_i^{k-\frac12}\big)\\\label{sch5}
&+(2\theta-1)\big(\delta_{\hat t}U_i^{k}-\delta_{\hat t}U_i^{k-1}\big)\big] +d_n^{(n+1)}(2-2\theta)(\delta_tU_i^{\frac12}-\psi_i)-({\tilde R}_t)_i^{n+1},
\end{align}
and
\begin{align}\label{sch6}
\Delta_t^\alpha V_i^\theta=2d_0^{(1)}\big(\delta_t U_i^{\frac12}-\psi_i \big)-({\tilde R}_t)_i^\theta.
\end{align}
Denote $u_i^n$, $v_i^n$ and $w_i^n$ the numerical approximations of $U_i^n$, $V_i^n$ and $W_i^n$, respectively. Omitting the small terms $(R_x)_i^n$, $({\hat R}_t)_i^n$, $(R_{w})_i^n$ in \eqref{sch1}, $({\hat R}_t)_i^\theta$ in \eqref{sch2}, $({\tilde R}_t)_i^{n+1}$ in \eqref{sch5} and $({\tilde R}_t)_i^\theta$ in \eqref{sch6}, we obtain the following linearized difference scheme:
\begin{align}\nonumber
& (1-\theta) \Delta_t^\alpha  v_i^{n+\theta}+\theta \Delta_t^\alpha  v_i^{n-1+\theta}=\delta_x^2 \Big(\frac{w_i^{n+1}+w_i^n}{2}\Big)-f(u_i^n)+p_i^n, \\\label{sc1}
&~~~~~~~~~~~~~~~~~~~~~~~~~~~~~~~~~~~~~~~~ 1\leq n\leq N-1,~ 1\leq i\leq M-1,\\\label{sc2}
& \Delta_t^\alpha v_i^\theta=(\varphi_{xx}+\theta\tau\psi_{xx})_i-f(\varphi_i+\theta\tau\psi_i)
+p_i^\theta, \quad 1\leq i\leq M-1,\\\label{sc3}
& u_0^n=u_M^n=0, \quad 1\leq n\leq N,\\\label{sc4}
& u_i^0=\varphi_i,\quad v_i^0=\psi_i,\quad  0\leq i\leq M.
\end{align}

\section{Analysis of the proposed scheme}\label{Analysis}

Before carrying out the convergence and stability of difference scheme \eqref{sc1}--\eqref{sc4}, we first list some preliminary lemmas.

\begin{lemma}\label{GRW}
(Gronwall's inequality \cite{Gronwall}) Let $\{G_n\}$ and $\{k_n\}$ be nonnegative sequences satisfying
$$G_0\leq K,\qquad G_n \leq K+\sum_{l=0}^{n-1}k_l G_l,\quad n\geq 1, $$
where $K\geq 0$. Then
$$G_n\leq K\exp\Big(\sum_{l=0}^{n-1}k_l \Big),\quad n\geq 1. $$
\end{lemma}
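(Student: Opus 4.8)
The plan is to prove the estimate by a standard majorizing-sequence argument. First I would introduce the auxiliary sequence
$$H_n=K+\sum_{l=0}^{n-1}k_l G_l,\qquad n\ge 0,$$
with the convention that the empty sum is zero, so that $H_0=K$. The two hypotheses say precisely that $G_0\le K=H_0$ and $G_n\le H_n$ for every $n\ge 1$; hence $G_n\le H_n$ for all $n\ge 0$. Since all $k_l$ and $G_l$ are nonnegative, $H_n$ is nondecreasing in $n$, so it suffices to bound $H_n$ and then invoke $G_n\le H_n$.

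Next I would derive a one-step recursion for $H_n$. From the definition, $H_{n+1}-H_n=k_nG_n$, and using $0\le G_n\le H_n$ together with $k_n\ge 0$,
$$H_{n+1}=H_n+k_nG_n\le (1+k_n)H_n\le e^{k_n}H_n,$$
where the last step is the elementary bound $1+x\le e^x$ for $x\ge 0$. Iterating this inequality from $n-1$ down to $0$ and recalling $H_0=K$ gives
$$H_n\le H_0\prod_{l=0}^{n-1}e^{k_l}=K\exp\Big(\sum_{l=0}^{n-1}k_l\Big),\qquad n\ge 1,$$
and combining with $G_n\le H_n$ yields the claimed bound. (The case $n=0$ is simply the hypothesis $G_0\le K$.)

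There is really no obstacle here; the only points that need a little care are the bookkeeping of the empty sum at $n=0$ (so that the base case $H_1\le e^{k_0}H_0$ uses exactly the assumption $G_0\le K$), the use of nonnegativity of $k_n$ to pass from $G_n\le H_n$ to $k_nG_n\le k_nH_n$, and the degenerate case $K=0$, which forces $G_n=0$ for all $n$ and makes the conclusion trivial. An alternative, equally short route is a direct induction on $n$ using the hypothesis together with a comparison of $\sum_{l=0}^{n-1}k_l\exp(\sum_{j=0}^{l-1}k_j)$ against $\exp(\sum_{l=0}^{n-1}k_l)-1$, but the majorizing-sequence version above is cleaner because it avoids that summation estimate.
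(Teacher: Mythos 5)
Your proof is correct: the majorizing sequence $H_n$ is well defined, the one-step bound $H_{n+1}\leq e^{k_n}H_n$ follows legitimately from $G_n\leq H_n$, $k_n\geq 0$ and $1+x\leq e^x$, and iterating gives the stated estimate. The paper itself offers no proof of this lemma---it is quoted directly from the cited reference \cite{Gronwall}---so there is nothing to compare against; your argument is the standard, complete derivation of the discrete Gronwall inequality.
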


\begin{lemma}\label{1norm} (\cite{Sun1})
Let $u\in \mathcal{V}_h$, it holds that
$$ \|u\|^2\leq \frac{(b-a)^2}{6}|u|_1^2.$$
\end{lemma}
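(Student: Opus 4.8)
The plan is to establish this as a discrete Poincar\'e--Friedrichs inequality, exploiting the homogeneous boundary values $u_0=u_M=0$ that are built into the space $\mathcal{V}_h$. First I would represent each interior value $u_i$ ($1\le i\le M-1$) as a telescoping sum of difference quotients in two complementary ways:
$$u_i=u_i-u_0=h\sum_{l=1}^{i}\delta_x u_{l-\frac12},\qquad u_i=-(u_M-u_i)=-h\sum_{l=i+1}^{M}\delta_x u_{l-\frac12}.$$
Applying the Cauchy--Schwarz inequality to each sum gives the two one-sided bounds
$$u_i^2\le h^2 i\sum_{l=1}^{i}\big(\delta_x u_{l-\frac12}\big)^2,\qquad u_i^2\le h^2(M-i)\sum_{l=i+1}^{M}\big(\delta_x u_{l-\frac12}\big)^2.$$

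The decisive step is to combine these two estimates convexly, with weight $\frac{M-i}{M}$ on the first and $\frac{i}{M}$ on the second. Since the two partial sums of $(\delta_x u_{l-\frac12})^2$ then add up to the complete sum $\sum_{l=1}^{M}(\delta_x u_{l-\frac12})^2=\frac1h|u|_1^2$, the weighted combination collapses to the pointwise estimate
$$u_i^2\le \frac{h\,i(M-i)}{M}\,|u|_1^2,\qquad 1\le i\le M-1.$$
Finally I would sum this over $i$, using $\|u\|^2=h\sum_{i=1}^{M-1}u_i^2$ together with the elementary identity $\sum_{i=1}^{M-1}i(M-i)=\frac{M(M^2-1)}{6}$, to obtain
$$\|u\|^2\le \frac{h^2(M^2-1)}{6}\,|u|_1^2<\frac{(Mh)^2}{6}\,|u|_1^2=\frac{(b-a)^2}{6}\,|u|_1^2,$$
since $b-a=Mh$.

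The computation is short, and the only place that needs genuine thought is the choice of the weights $\frac{M-i}{M}$ and $\frac{i}{M}$: using only a single one-sided representation of $u_i$ produces the strictly worse constant $\tfrac12$ (via $\sum_{i=1}^{M-1}i\le\tfrac{M^2}{2}$), and it is precisely the symmetrization between the left and right endpoints that yields the factor $\tfrac16$. No fractional-calculus machinery is involved here; the statement concerns only the grid-function norms $\|\cdot\|$ and $|\cdot|_1$.
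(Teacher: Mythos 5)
Your proof is correct; the paper itself offers no argument for this lemma, merely citing \cite{Sun1}, and your symmetrized telescoping/Cauchy--Schwarz derivation (with the weights $\frac{M-i}{M}$ and $\frac{i}{M}$ and the identity $\sum_{i=1}^{M-1}i(M-i)=\frac{M(M^2-1)}{6}$) is precisely the standard proof of this discrete Friedrichs inequality found in that reference, yielding the sharp-looking constant $\frac{(b-a)^2}{6}$.
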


\begin{lemma}\label{alik} (\cite{Alikhanov})
If the positive sequence $\{g_k^{(n+1)}|0\leq k\leq n,n\geq 1\}$ is strictly decreasing and satisfies $(2\sigma-1)g_0^{(n+1)}-\sigma g_1^{(n+1)}>0$ for a constant $\sigma\in(0,1)$ . Then
$$
2[\sigma y^{n+1}+(1-\sigma)y^n]\sum_{k=0}^{n}g_{n-k}^{(n+1)}(y^{k+1}-y^k) \geq \sum_{k=0}^{n}g_{n-k}^{(n+1)}\big[(y^{k+1})^2-(y^k)^2 \big],\quad n\geq 1.
$$
\end{lemma}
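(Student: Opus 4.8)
The plan is to reduce the assertion to a telescoping (Abel) sum of squares and then use the structural condition $(2\sigma-1)g_0^{(n+1)}-\sigma g_1^{(n+1)}>0$ to control the single negative boundary term that arises. Throughout fix $n\geq 1$ and abbreviate $g_k:=g_k^{(n+1)}$. Writing $(y^{k+1})^2-(y^k)^2=(y^{k+1}-y^k)(y^{k+1}+y^k)$, the claimed inequality is equivalent to
$$\sum_{k=0}^{n}g_{n-k}(y^{k+1}-y^k)\big[2\sigma y^{n+1}+2(1-\sigma)y^n-y^{k+1}-y^k\big]\geq 0 .$$
The key algebraic observation is that, upon introducing the shifted quantities $a_k:=\sigma y^{n+1}+(1-\sigma)y^n-y^k$ for $0\leq k\leq n+1$, one has $y^{k+1}-y^k=a_k-a_{k+1}$ and $2\sigma y^{n+1}+2(1-\sigma)y^n-y^{k+1}-y^k=a_k+a_{k+1}$, so the $k$-th summand collapses to $g_{n-k}(a_k^2-a_{k+1}^2)$. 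It therefore suffices to prove $\sum_{k=0}^{n}g_{n-k}(a_k^2-a_{k+1}^2)\geq 0$.

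Next I would perform summation by parts on this last sum: shifting the index in the $a_{k+1}^2$ contribution gives
$$\sum_{k=0}^{n}g_{n-k}(a_k^2-a_{k+1}^2)=g_n a_0^2+\sum_{k=1}^{n}\big(g_{n-k}-g_{n-k+1}\big)a_k^2-g_0 a_{n+1}^2 .$$
Since $\{g_k\}$ is strictly decreasing we have $g_{n-k}-g_{n-k+1}>0$ for every $1\leq k\leq n$, so all terms on the right-hand side are manifestly nonnegative except the last one, $-g_0 a_{n+1}^2$.

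The heart of the argument is to absorb $-g_0 a_{n+1}^2$ into the $k=n$ term. Directly from the definition of $a_k$ one gets $a_n=\sigma(y^{n+1}-y^n)$ and $a_{n+1}=-(1-\sigma)(y^{n+1}-y^n)$, whence $a_{n+1}^2=\big((1-\sigma)^2/\sigma^2\big)a_n^2$. Substituting and using $\sigma^2-(1-\sigma)^2=2\sigma-1$,
$$\big(g_0-g_1\big)a_n^2-g_0 a_{n+1}^2=\frac1{\sigma^2}\big[(2\sigma-1)g_0-\sigma^2 g_1\big]a_n^2 ,$$
and since $0<\sigma<1$ we have $\sigma^2 g_1<\sigma g_1<(2\sigma-1)g_0$ by hypothesis, so the bracket is positive. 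Combining this with $g_n a_0^2\geq 0$ and the remaining nonnegative terms $\big(g_{n-k}-g_{n-k+1}\big)a_k^2$ for $1\leq k\leq n-1$ yields the desired inequality.

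The only genuine obstacle is spotting the substitution $a_k=\sigma y^{n+1}+(1-\sigma)y^n-y^k$ that turns the weighted cross term $2[\sigma y^{n+1}+(1-\sigma)y^n]$ paired against $y^{k+1}+y^k$ into a clean telescoping difference of squares; once that is in place, the hypothesis on $g_0$ and $g_1$ is exactly what is needed to keep the boundary term $-g_0 a_{n+1}^2$ under control, and the remainder is routine bookkeeping that also covers the base case $n=1$ verbatim.
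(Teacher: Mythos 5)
Your proof is correct; I checked each step. The substitution $a_k=\sigma y^{n+1}+(1-\sigma)y^n-y^k$ does turn the $k$-th summand into $g_{n-k}^{(n+1)}(a_k^2-a_{k+1}^2)$, the Abel summation is right, the monotonicity of $\{g_k^{(n+1)}\}$ handles all interior terms, and the identities $a_n=\sigma(y^{n+1}-y^n)$, $a_{n+1}=-(1-\sigma)(y^{n+1}-y^n)$ reduce the boundary term to the sign of $(2\sigma-1)g_0^{(n+1)}-\sigma^2 g_1^{(n+1)}$, which is positive because $\sigma^2 g_1^{(n+1)}<\sigma g_1^{(n+1)}<(2\sigma-1)g_0^{(n+1)}$. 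Note, however, that the paper does not prove this lemma at all: it is quoted verbatim from the cited reference of Alikhanov, so there is no in-paper argument to compare against. Relative to the original source, where the inequality is established by expanding $2v^{(\sigma,n+1)}-y^{k+1}-y^k$ in terms of the increments $y^{j+1}-y^j$ and analyzing the resulting quadratic form in those increments, your route is more economical: a single change of variables converts the whole expression into a telescoping sum of squares, and the hypothesis on $g_0^{(n+1)},g_1^{(n+1)}$ enters exactly once, to absorb the lone negative boundary term. The price is only that one must notice the shift $a_k$; after that the argument, including the case $n=1$, is mechanical.
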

 We need a special form of Lemma \ref{alik}, which is stated as the following:
\begin{lemma}\label{stproof1}
For any real sequence $F^n$, the following inequality holds:
\begin{align*}
&2[\theta v^{n+1-\theta}+(1-\theta)v^{n-\theta}][(1-\theta) \Delta_t^\alpha v^{n+\theta}+\theta \Delta_t^\alpha  v^{n-1+\theta}-F^n]\\
\geq &\sum_{k=0}^n \frac{c_{n-k}^{(n+1)}}{\mu} {(v^{k+1-\theta})}^2- \sum_{k=0}^{n-1} \frac{c_{n-k-1}^{(n)}}{\mu} {(v^{k+1-\theta})}^2-\frac{ b_n}{(1-\theta)\mu}{(v^{1-\theta})}^2-d_n^{(n+1)}\Big(v^0+\frac1{d_n^{(n+1)}}F^n \Big)^2.
\end{align*}
\end{lemma}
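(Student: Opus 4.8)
The plan is to reduce Lemma~\ref{stproof1} to Lemma~\ref{alik} by choosing the right decreasing sequence and the right variable, and then to absorb the extra terms coming from the first time level and from the source $F^n$. First I would set $\sigma=\theta=\tfrac{3-\alpha}{2}$ (note $\theta\in(0,1)$ since $1<\alpha<2$) and apply Lemma~\ref{alik} with $y^k=v^{k-\theta}$ and with the sequence $g_k^{(n+1)}=c_k^{(n+1)}$. Lemma~\ref{akbk}(d) gives that this sequence is strictly decreasing, and Lemma~\ref{akbk}(e) gives exactly the hypothesis $(2\theta-1)c_0^{(n+1)}-\theta c_1^{(n+1)}>0$; hence
\begin{align*}
2\big[\theta v^{n+1-\theta}+(1-\theta)v^{n-\theta}\big]\sum_{k=0}^n c_{n-k}^{(n+1)}\big(v^{k+1-\theta}-v^{k-\theta}\big)
\ \geq\ \sum_{k=0}^n c_{n-k}^{(n+1)}\big[(v^{k+1-\theta})^2-(v^{k-\theta})^2\big].
\end{align*}
Dividing by $\mu>0$ turns the left side into $2[\theta v^{n+1-\theta}+(1-\theta)v^{n-\theta}]$ times $\sum_k d_{n-k}^{(n+1)}$-type sums, which by the identity \eqref{timedisc} is precisely $(1-\theta)\Delta_t^\alpha v^{n+\theta}+\theta\Delta_t^\alpha v^{n-1+\theta}$ up to the boundary term $d_n^{(n+1)}(v^{1-\theta}-v^0)$ versus $\frac1\mu[(1-\theta)c_n^{(n+1)}-\theta b_n](v^1-v^0)$; I would keep careful track of the relation between these using the definition \eqref{dkn}.

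Next I would handle the right-hand side telescoping term. Write $\sum_{k=0}^n c_{n-k}^{(n+1)}[(v^{k+1-\theta})^2-(v^{k-\theta})^2]$ and use Lemma~\ref{akbk}(g), namely $c_{n-1}^{(n+1)}=c_{n-1}^{(n)}+b_n$ and $c_k^{(n+1)}=c_k^{(n)}$ for $0\le k\le n-2$, to re-index it into $\sum_{k=0}^n \frac{c_{n-k}^{(n+1)}}{\mu}(v^{k+1-\theta})^2-\sum_{k=0}^{n-1}\frac{c_{n-k-1}^{(n)}}{\mu}(v^{k+1-\theta})^2$ plus a leftover proportional to $b_n(v^{1-\theta})^2$; this is where the term $-\frac{b_n}{(1-\theta)\mu}(v^{1-\theta})^2$ in the claim comes from (the factor $\tfrac1{1-\theta}$ matching the $\tfrac{\theta}{1-\theta}b_n$ correction built into $d_n^{(n+1)}$). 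Then I would incorporate $F^n$: since $2[\theta v^{n+1-\theta}+(1-\theta)v^{n-\theta}]$ is the same multiplier appearing on the left, the extra contribution is $-2[\theta v^{n+1-\theta}+(1-\theta)v^{n-\theta}]F^n$, and the $v^0$ boundary term contributes $-2[\theta v^{n+1-\theta}+(1-\theta)v^{n-\theta}]\,d_n^{(n+1)}v^0$ with the wrong sign relative to the square we want. I would complete the square: the two cross terms together with a $-d_n^{(n+1)}(v^0+\frac1{d_n^{(n+1)}}F^n)^2$ term reconstruct exactly $-d_n^{(n+1)}\big(v^0+\frac1{d_n^{(n+1)}}F^n\big)^2$ plus a nonnegative square in $[\theta v^{n+1-\theta}+(1-\theta)v^{n-\theta}]$ that I can discard, preserving the inequality direction.

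The main obstacle I anticipate is bookkeeping rather than conceptual: matching the $d_n^{(n+1)}$-normalization (which subtracts $\tfrac{\theta}{1-\theta}b_n$ from $c_n^{(n+1)}$ before dividing by $\mu$) against the raw $c$-coefficients produced by Lemma~\ref{alik}, so that the residual $b_n$ terms land with the stated coefficient $\tfrac{1}{(1-\theta)\mu}$ and the $v^0$ boundary term pairs correctly with $F^n$ to form the single square $-d_n^{(n+1)}(v^0+\tfrac1{d_n^{(n+1)}}F^n)^2$. Concretely, I would first prove the $F^n=0$, $v^0=0$ case cleanly from Lemma~\ref{alik} and the telescoping identity, then add back the $v^0$ and $F^n$ contributions by the completion-of-square step above; positivity of $\mu$, $b_n$ (Lemma~\ref{akbk}(a)) and $1-\theta=\tfrac{\alpha-1}{2}>0$ are all that is needed to keep every discarded square nonnegative.
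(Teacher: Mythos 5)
Your plan assembles the right ingredients (Lemma \ref{alik}, the re-indexing via Lemma \ref{akbk}(g), and the pairing of $v^0$ with $F^n$ into the square $-d_n^{(n+1)}(v^0+\tfrac{1}{d_n^{(n+1)}}F^n)^2$), but the mechanism you give for the last step does not close. You propose to first prove the homogeneous case ($v^0=0$, $F^n=0$) and then restore the cross terms $-2A\,d_n^{(n+1)}z$, where $A=\theta v^{n+1-\theta}+(1-\theta)v^{n-\theta}$ and $z=v^0+\tfrac{1}{d_n^{(n+1)}}F^n$, by completing the square against the target $-d_n^{(n+1)}z^2$. But $-2A\,d_n^{(n+1)}z+d_n^{(n+1)}z^2=d_n^{(n+1)}(z-A)^2-d_n^{(n+1)}A^2$: the completed square leaves a residual $-d_n^{(n+1)}A^2$ of the \emph{wrong} sign, and the homogeneous inequality supplies no spare $+d_n^{(n+1)}A^2$ to absorb it (that would be a strictly stronger statement than Lemma \ref{alik}, needing its own proof). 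So ``discard a nonnegative square'' does not preserve the inequality here, and the two-step plan fails as stated. A secondary issue: your first display applies Lemma \ref{alik} with $g=c^{(n+1)}$ and $y^k=v^{k-\theta}$ summed from $k=0$, which invokes the undefined quantity $v^{-\theta}$.

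The paper's proof sidesteps both problems with a single application of Lemma \ref{alik}: take $g_k^{(n+1)}=d_k^{(n+1)}$ (hypotheses verified by Lemma \ref{dk}(b),(c), not by the $c$-coefficient facts), $y^k=v^{k-\theta}$ for $k\ge 1$, and, crucially, $y^0:=v^0+\tfrac{1}{d_n^{(n+1)}}F^n$. With this substitution the factor $\sum_{k=0}^n d_{n-k}^{(n+1)}(y^{k+1}-y^k)$ is exactly $(1-\theta)\Delta_t^\alpha v^{n+\theta}+\theta\Delta_t^\alpha v^{n-1+\theta}-F^n$ by \eqref{timedisc}, and the telescoped right-hand side of Lemma \ref{alik} already delivers $-d_n^{(n+1)}(y^0)^2$, i.e.\ the desired square, with no after-the-fact completion of squares; the only remaining work is the re-indexing through $c_{n-1}^{(n+1)}=c_{n-1}^{(n)}+b_n$ that you correctly identified. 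In short, the $F^n$ and $v^0$ terms must be placed \emph{inside} the quadratic form of Lemma \ref{alik} from the outset; they cannot be bolted on afterwards.
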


\begin{proof}
Taking $g_k^{(n+1)}=d_k^{(n+1)}$, $y^n=v^{n-\theta}$ $(n\geq 1)$ and $y^0=v^0+\frac1{d_n^{(n+1)}}F^n$ in Lemma \ref{alik}, and using Lemma \ref{dk}(b),(c), we get
\begin{align*}
&2\big[\theta v^{n+1-\theta} +(1-\theta)v^{n-\theta} \big]\big[(1-\theta) \Delta_t^\alpha v^{n+\theta}+\theta \Delta_t^\alpha  v^{n-1+\theta}- F^n \big]\\
\geq& \sum_{k=1}^n d_{n-k}^{(n+1)} \big[ {(v^{k+1-\theta})}^2-{(v^{k-\theta})}^2\big]+d_n^{(n+1)} \Big[{(v^{1-\theta})}^2-\Big(v^0+\frac1{d_n^{(n+1)}}F^n \Big)^2\Big] \\
=& \sum_{k=0}^n \frac{c_{n-k}^{(n+1)}}{\mu} {(v^{k+1-\theta})}^2- \sum_{k=1}^n \frac{c_{n-k}^{(n+1)}}{\mu} {(v^{k-\theta})}^2 -\frac{\theta b_n}{(1-\theta)\mu}{(v^{1-\theta})}^2-d_n^{(n+1)}\Big(v^0+\frac1{d_n^{(n+1)}}F^n \Big)^2\\
=& \sum_{k=0}^n \frac{c_{n-k}^{(n+1)}}{\mu} {(v^{k+1-\theta})}^2- \sum_{k=1}^n \frac{c_{n-k}^{(n)}}{\mu} {(v^{k-\theta})}^2 -\frac{ b_n}{(1-\theta)\mu}{(v^{1-\theta})}^2-d_n^{(n+1)}\Big(v^0+\frac1{d_n^{(n+1)}}F^n \Big)^2\\
=& \sum_{k=0}^n \frac{c_{n-k}^{(n+1)}}{\mu} {(v^{k+1-\theta})}^2- \sum_{k=0}^{n-1} \frac{c_{n-k-1}^{(n)}}{\mu} {(v^{k+1-\theta})}^2 -\frac{ b_n}{(1-\theta)\mu}{(v^{1-\theta})}^2-d_n^{(n+1)}\Big(v^0+\frac1{d_n^{(n+1)}}F^n \Big)^2.
\end{align*}
\end{proof}

\subsection{Convergence}
Now we denote the error $e_i^n=U_i^n-u_i^n$, $0\leq i\leq M$, $0\leq n\leq N$,
and
$$(1-\theta) \Delta_t^\alpha {\hat v}_i^{n+\theta}+\theta \Delta_t^\alpha  {\hat v}_i^{n-1+\theta}=\sum_{k=1}^n d_{n-k}^{(n+1)}({\hat v}_i^{k+1-\theta}-{\hat v}_i^{k-\theta})+ d_n^{(n+1)}{\hat v}_i^{1-\theta},$$
in which
\begin{align}\nonumber
{\hat v}_i^{k+1-\theta}
=&(2-2\theta)\frac{e_i^{k+1}-e_i^{k}}{\tau}+(2\theta-1)\frac{e_i^{k+1}-e_i^{k-1}}{2\tau},\quad k\geq 1,
\\\label{v1theta}
{\hat v}_i^{1-\theta}=&(2-2\theta)\frac{e_i^{1}}{\tau}.
\end{align}
 Taking
\begin{align*}
{\hat w}_i^k=&(\frac32-\theta)\big[\theta e_i^{k}+(1-\theta)e_i^{k-1}\big]+(\theta-\frac12)\big[\theta
e_i^{k-1}+(1-\theta)e_i^{k-2}\big],\quad k\geq 2,\\
{\hat w}_i^1=&[(\frac32-\theta)\theta+(\theta-\frac12)(1-\theta)] e_i^{1},
\end{align*}
 and subtracting \eqref{sc1}--\eqref{sc4} from \eqref{sch1}--\eqref{sch4}, we obtain the following error system:
\begin{align}\nonumber
&(1-\theta) \Delta_t^\alpha {\hat v}_i^{n+\theta}+\theta \Delta_t^\alpha  {\hat v}_i^{n-1+\theta}=\delta_x^2\Big(\frac{{\hat w}_i^{n+1}+{\hat w}_i^n}{2}\Big) -\big[f(U_i^n)-f(u_i^n)\big] +R_i^{n+1}, \\\label{err1}
&~~~~~~~~~~~~~~~~~~~~~~~~~~~~~~~~~~~~~~~~~~~~~~~~~~~~\qquad 1\leq n\leq N-1,~ 1\leq i\leq M-1,\\\label{err2}
&\frac{d_0^{(1)}}{1-\theta} {\hat v}_i^{1-\theta}=R_i^1,\quad 1\leq i\leq M-1,\\\label{err3}
&e_0^n=e_M^n=0, \quad 1\leq n\leq N,\\\label{err4}
& e^0_i=0,\quad 0\leq i\leq M,
\end{align}
where $R_i^1=({\hat R}_t)_i^\theta+({\tilde R}_t)_i^\theta$ and $R_i^{n+1}=(R_x)_i^n-({\hat R}_t)_i^n+(R_{w})_i^n+({\tilde R}_t)_i^{n+1}$, $ 1\leq n\leq N-1$.
Then there exists a positive constant $C_4$ such that
\begin{align}\label{Rn}
\|R^1\|\leq C_4\tau^{3-\alpha}, \quad |R^1|_1\leq C_4\tau^{3-\alpha}\mbox{  and  }\|R^{n+1}\|\leq C_4(\tau^2+h^2), ~ 1\leq n\leq N-1.
\end{align}
Then we conclude the convergence of proposed scheme \eqref{sc1}--\eqref{sc4} as following.
\begin{theorem}\label{convergence}
  Let $u(x,t)$ be the solution of the problem \eqref{eq1}--\eqref{eq3} and smooth enough, and let $\{u_i^n,0\leq i\leq M,0\leq n\leq N\}$ be the solution of the scheme \eqref{sc1}--\eqref{sc4}. 
  It holds that
\begin{align}\label{converesult}
\|e^n\|\leq {\bar C}(\tau^2+h^2),\quad 0\leq n\leq N,
\end{align}
where ${\bar C}=\frac{\exp(C_7)}{2(1-\theta)}\Big[ \frac12 \Big(\frac{(\theta-\frac12)C_4\Gamma(3-\alpha)}{\theta^{2-\alpha}}\Big)^2+4\Gamma(2-\alpha)T^\alpha\Big(C_5+C_6
+\frac{2\Gamma(4-\alpha)T^{\alpha}}{(2-\alpha)^2}C_4^2\Big) \Big]^{\frac12}$, with \\
$C_5=\Big[\frac{(3\theta-2\theta^2-\frac12)C_3\Gamma(3-\alpha)}{2\theta^{2-\alpha}}\Big]^2$ , $C_6=\frac{(1-\theta)\Gamma(3-\alpha)(3-2\theta)T^{2-\alpha}}{2\theta^{4-2\alpha}}$,
and
$C_7=\frac{\Gamma(2-\alpha)\Gamma(4-\alpha)T^{2\alpha} L^2 }{\left[(2-\alpha)(1-\theta)\right]^2}$.
\end{theorem}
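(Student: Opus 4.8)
The plan is to carry out a discrete energy estimate on the error system \eqref{err1}--\eqref{err4}, exactly in the spirit of the weighted-approach analysis but adapted to the auxiliary grid function $\hat w$. First I would take the inner product of \eqref{err1} with the natural test function $\theta\,\hat v^{n+1-\theta}+(1-\theta)\,\hat v^{n-\theta}$ (in the $\langle\cdot,\cdot\rangle$ sense). The left-hand side is handled by Lemma \ref{stproof1} with $F^n$ chosen to absorb the truncation term $R^{n+1}$, producing a telescoping-type expression in $\|\hat v^{k+1-\theta}\|^2$ together with the clean $b_n$-remainder and the $d_n^{(n+1)}\big(e^0+\tfrac1{d_n^{(n+1)}}F^n\big)^2$ term; since $e^0=0$ this last term is just $\frac{1}{d_n^{(n+1)}}\|R^{n+1}\|^2$, controlled by Lemma \ref{dk}(f). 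The diffusion term $\langle \delta_x^2(\tfrac{\hat w^{n+1}+\hat w^n}{2}),\,\theta\hat v^{n+1-\theta}+(1-\theta)\hat v^{n-\theta}\rangle$ is where the key algebraic identity (the "equality \eqref{ww}" advertised after Lemma \ref{w}) comes in: summation by parts turns $\delta_x^2$ into $-|\cdot|_1^2$-type quantities, and one must recognize that $\theta\hat v^{n+1-\theta}+(1-\theta)\hat v^{n-\theta}$, after multiplication by $\tau$, telescopes against $\tfrac{\hat w^{n+1}+\hat w^n}{2}$ so as to yield a difference of the form $\tfrac{1}{2\tau}\big(|\hat w^{n+1}|_1^2-|\hat w^n|_1^2\big)$ up to harmless lower-order pieces; this is precisely why $W$ was defined with those $(\tfrac32-\theta)$ and $(\theta-\tfrac12)$ weights.

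Next I would sum the resulting inequality over $n$. Using Lemma \ref{akbk}(g) the weighted fractional sums collapse to a single positive quantity $\sum_{k}c_{n-k}^{(n+1)}\|\hat v^{k+1-\theta}\|^2/\mu$ minus the initial contribution, and after handling the $n=0$ level separately via \eqref{err2} (which gives $\|\hat v^{1-\theta}\|$ and hence $\|e^1\|$ directly in terms of $\|R^1\|\le C_4\tau^{3-\alpha}$, using $d_0^{(1)}=\theta^{2-\alpha}\tau^{1-\alpha}/\Gamma(3-\alpha)$), one arrives at a bound of the form
\begin{align*}
\frac{c_0^{(n+1)}}{\mu}\|\hat v^{n+1-\theta}\|^2 + \frac{1}{2\tau}|\hat w^{n+1}|_1^2 \le (\text{initial data}) + (\text{truncation}) + \sum_{m}\frac{1}{d_m^{(m+1)}}\big\|f(U^m)-f(u^m)\big\|^2 + \cdots.
\end{align*}
The Lipschitz condition \eqref{Lip} bounds $\|f(U^m)-f(u^m)\|\le L\|e^m\|$, and the factors $\tau/d_m^{(m+1)}$ are summable by Lemma \ref{dk}(f), which is what produces the constant $C_7$ in the statement. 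The truncation contributions are controlled by \eqref{Rn} together with Lemma \ref{dk}(a),(d),(f): the $\tau^{3-\alpha}$ from $R^1$ is multiplied by $1/d_0^{(1)}\sim\tau^{\alpha-1}$ to give $\tau^2$, and the $(\tau^2+h^2)$ from $R^{n+1}$ survives squared-and-summed against $\tau/d_m^{(m+1)}$; this accounts for $C_5$, $C_6$ and the $\tfrac{2\Gamma(4-\alpha)T^\alpha}{(2-\alpha)^2}C_4^2$ piece of $\bar C$.

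The final step is to convert the estimate on $\|\hat v^{n+1-\theta}\|$ and $|\hat w^{n+1}|_1$ into the claimed bound on $\|e^n\|$. From the definition \eqref{v1theta} of $\hat v^{k+1-\theta}$ and a discrete-summation argument (summing $\tau\hat v^{k+1-\theta}$ and noting the identity $\hat v^{k+1-\theta}=\delta_t\big((2-2\theta)e^{k}+\cdots\big)$-type telescoping in the auxiliary variable $\hat w$), one expresses $e^{n}$ — or rather a positive combination $(\tfrac32-\theta)\theta+(\theta-\tfrac12)(1-\theta)$ times it — in terms of $\hat w^{n}$; that scalar coefficient equals $\tfrac{1}{2(1-\theta)}\cdot(\text{something positive})$ and explains the prefactor $\tfrac{1}{2(1-\theta)}$ in $\bar C$. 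Combined with Lemma \ref{1norm} ($\|e^n\|\le \tfrac{(b-a)}{\sqrt6}|e^n|_1$, hence controlled by $|\hat w^n|_1$ plus lower-order terms), this closes the loop and lets me apply the discrete Gronwall inequality, Lemma \ref{GRW}, with $\{k_l\}$ coming from the Lipschitz terms; the resulting $\exp(\sum k_l)=\exp(C_7)$ gives the stated $\bar C$. The main obstacle I anticipate is the diffusion-term identity: getting $\langle\delta_x^2(\tfrac{\hat w^{n+1}+\hat w^n}{2}),\theta\hat v^{n+1-\theta}+(1-\theta)\hat v^{n-\theta}\rangle$ to telescope exactly as $\tfrac{1}{2\tau}(|\hat w^{n+1}|_1^2-|\hat w^n|_1^2)$ requires that the weighted averages defining $W$ in Lemma \ref{w} be compatible with the weighted time-difference $\theta\hat v^{n+1-\theta}+(1-\theta)\hat v^{n-\theta}$; verifying this compatibility (Lemma \ref{w} and the equality \eqref{ww}) and tracking the error terms $(R_w)$ and the $n=1$ special-case of $W^1$ cleanly through the sum is the delicate bookkeeping on which the whole second-order-in-time claim rests.
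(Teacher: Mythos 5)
Your overall architecture is the paper's: you test \eqref{err1} with $2[\theta\hat v^{n+1-\theta}+(1-\theta)\hat v^{n-\theta}]=2(\hat w^{n+1}-\hat w^n)/\tau$, use Lemma \ref{stproof1} for the fractional part (with the truncation and the nonlinear difference absorbed into $F^n$), obtain the exact telescoping $\frac{1}{\tau}\big(|\hat w^{n+1}|_1^2-|\hat w^{n}|_1^2\big)$ from the diffusion term, sum in $n$, bound $\|\tau\sum_k\hat v^{k+1-\theta}\|^2$ by Cauchy--Schwarz together with Lemma \ref{akbk}(h), treat the nonlinear term with the Lipschitz bound and Lemma \ref{dk}(f), handle the first level via \eqref{err2}, and finish with Gronwall. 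All of that is what the paper does.

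The gap is in your final conversion step. What the summation actually controls is $\tau\sum_{k=0}^m\hat v^{k+1-\theta}=(\frac32-\theta)e^{m+1}+(\theta-\frac12)e^m-(\theta-\frac12)e^1$, i.e.\ the norm of the weighted combination $(\frac32-\theta)e^{m+1}+(\theta-\frac12)e^m$. Since both weights are positive (recall $\theta=\frac{3-\alpha}{2}\in(\frac12,1)$), this norm does \emph{not} dominate $\|e^{m+1}\|$ in general: the reverse triangle inequality only gives $(\frac32-\theta)\|e^{m+1}\|-(\theta-\frac12)\|e^m\|$, which is useless if $\|e^m\|$ is large. Your proposed fix --- reading $e^n$ off from $\hat w^n$ and invoking Lemma \ref{1norm} --- meets exactly the same obstruction, because $\hat w^n$ is again a positive-weighted combination of $e^n,e^{n-1},e^{n-2}$; moreover the scalar $(\frac32-\theta)\theta+(\theta-\frac12)(1-\theta)=3\theta-2\theta^2-\frac12$ you cite is the coefficient appearing in $\hat w^1$, not the source of the prefactor. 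The paper resolves this by casting the whole estimate as a mathematical induction on \eqref{induction} and splitting into two cases: if $\|e^{m+1}\|\leq\|e^m\|$ the inductive hypothesis closes the step immediately; if $\|e^{m+1}\|>\|e^m\|$ then $\|(\frac32-\theta)e^{m+1}+(\theta-\frac12)e^m\|\geq(\frac32-\theta)\|e^{m+1}\|-(\theta-\frac12)\|e^m\|\geq 2(1-\theta)\|e^{m+1}\|$, which is the actual origin of the factor $\frac{1}{2(1-\theta)}$ in $\bar C$. Without this dichotomy (or an equivalent device) your argument does not produce a bound on $\|e^{m+1}\|$ itself, so the induction/Gronwall loop cannot be closed.
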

\begin{proof}
We have $\|e^0\|=0$ from \eqref{err4}.
Now we use mathematical induction to prove
\begin{align}\nonumber
\|e^{n}\|^2\leq & \frac{1}{4(1-\theta)^2}\left[ \frac12 \Big(\frac{(\theta-\frac12)C_4\Gamma(3-\alpha)}{\theta^{2-\alpha}}\Big)^2\right.\\\nonumber
&\left.+4\Gamma(2-\alpha)T^\alpha\Big(C_5+C_6+\frac{2\Gamma(4-\alpha)T^{\alpha}}{(2-\alpha)^2}C_4^2\Big) \right](\tau^2+h^2)^2\\\label{induction}
 &+ \frac{2 \Gamma(2-\alpha)T^\alpha L^2}{(1-\theta)^2}\tau\sum_{k=0}^{n-1} \frac1{d_k^{(k+1)}}\|e^{k}\|^2, \quad 1\leq n\leq N.
\end{align}
It follows from \eqref{v1theta}, \eqref{err2} and \eqref{Rn} that
\begin{align}\label{e1}
\|e^1\|=\frac{\tau}{2d_0^{(1)}}\|R^1\|=\frac{\Gamma(3-\alpha)\tau^\alpha}{2\theta^{2-\alpha}}\|R^1\|\leq \frac{C_4\Gamma(3-\alpha)}{2\theta^{2-\alpha}}\tau^3.
\end{align}
Hence \eqref{induction} holds for $n=1$.

Suppose \eqref{induction} is valid for $1\leq n\leq m$ ($1\leq m\leq N-1$),
we then prove it is also valid for $n=m+1$.

Taking the inner product of \eqref{err1} with
\begin{align}\label{ww}
 2\big[\theta {\hat v}_i^{n+1-\theta} +(1-\theta){\hat v}_i^{n-\theta}\big]=2\Big(\frac{{\hat w}_i^{n+1}-{\hat w}_i^n}{\tau}\Big),
 \quad 1\leq n\leq m,
\end{align}
we have
\begin{align}\nonumber
2\big\langle (1-\theta) \Delta_t^\alpha {\hat v}^{n+\theta}+\theta \Delta_t^\alpha  {\hat v}^{n-1+\theta}-R_f^{n+1},\theta {\hat v}^{n+1-\theta} +(1-\theta){\hat v}^{n-\theta}\big\rangle\\\label{error1_1}
=2\big\langle \delta_x^2 \big( \frac{{\hat w}^{n+1}+{\hat w}^n}{2} \big),\frac{{\hat w}^{n+1}-{\hat w}^n}{\tau} \big\rangle  ,
\end{align}
where $(R_f)_i^{n+1}=-\big[f(U_i^n)-f(u_i^n)\big]+R_i^{n+1}$.

It is easy to verify that
\begin{align}\label{wn1norm}
-2\Big\langle \delta_x^2 \Big(\frac{{\hat w}^{n+1}+{\hat w}^n}{2}\Big),\frac{{\hat w}^{n+1}-{\hat w}^n}{\tau}
\Big\rangle=\frac{|{\hat w}^{n+1}|_1^2-|{\hat w}^{n}|_1^2}{\tau}.
\end{align}
Noticing ${\hat v}^0=0$ and utilizing Lemma \ref{stproof1}, we get
\begin{align}\nonumber
&2\big\langle (1-\theta) \Delta_t^\alpha {\hat v}^{n+\theta}+\theta \Delta_t^\alpha  {\hat v}^{n-1+\theta}-(R_f)^{n+1},\theta {\hat v}^{n+1-\theta} +(1-\theta){\hat v}^{n-\theta}\big\rangle\\\label{wn1norm2}
\geq & \sum_{k=0}^n \frac{c_{n-k}^{(n+1)}}{\mu} \|{\hat v}^{k+1-\theta}\|^2- \sum_{k=0}^{n-1} \frac{c_{n-k-1}^{(n)}}{\mu} \|{\hat v}^{k+1-\theta}\|^2 -\frac{ b_n}{(1-\theta)\mu}\|{\hat v}^{1-\theta}\|^2-d_n^{(n+1)}\|\frac1{d_n^{(n+1)}}(R_f)^{n+1}\|^2.
\end{align}
Substituting \eqref{wn1norm} and \eqref{wn1norm2} into \eqref{error1_1}, we obtain
\begin{align}\label{error2}
 E^{n+1}-E^n \leq \frac{ \tau b_n}{(1-\theta)\mu}\|{\hat v}^{1-\theta}\|^2+\tau \frac1{d_n^{(n+1)}}\|(R_f)^{n+1}\|^2,\quad
1\leq n\leq m,
\end{align}
where
$$E^n=\tau \sum_{k=0}^{n-1} \frac{c_{n-k-1}^{(n)}}{\mu} \|{\hat v}^{k+1-\theta}\|^2+|{\hat w}^{n}|_1^2.$$
Summing up \eqref{error2} for $n$ from $1$ to $m$ yield
\begin{align*}
E^{m+1} \leq  E^{1}+\frac{\tau}{(1-\theta)\mu} \sum_{n=1}^m b_n \|{\hat v}^{1-\theta}\|^2+ \tau\sum_{n=1}^m \frac1{d_n^{(n+1)}}  \|(R_f)^{n+1}\|^2,
\end{align*}
then we can deduce the following inequality
\begin{align}\label{sumck1}
\frac{\tau}{\mu}\sum_{k=0}^{m}c_{m-k}^{(m+1)}\|{\hat v}^{k+1-\theta}\|^2
\leq  |{\hat w}^{1}|_1^2+
\frac{\tau}{\mu}\Big[c_0^{(1)}+\frac1{1-\theta} \sum_{n=1}^m b_n\Big]\|{\hat v}^{1-\theta}\|^2+
\tau\sum_{n=1}^m \frac1{d_n^{(n+1)}}\|(R_f)^{n+1}\|^2.
\end{align}
It can be verified by using Cauchy-Schwarz inequality and Lemma \ref{akbk}(h) that
\begin{align}\label{sumck2}
\|\tau\sum_{k=0}^{m} {\hat v}^{k+1-\theta}\|^2\leq\left(\tau\sum_{k=0}^{m}\frac{\mu}{c_{m-k}^{(m+1)}} \right)\frac{\tau}{\mu}\sum_{k=0}^{m}c_{m-k}^{(m+1)}\|{\hat v}^{k+1-\theta}\|^2\leq 2\Gamma(2-\alpha)t_{m+1}^\alpha\left(\frac{\tau}{\mu}\sum_{k=0}^{m}c_{m-k}^{(m+1)}{\|{\hat v}^{k+1-\theta}\|}^2\right).
\end{align}
Furthermore, the inequality $(y+z)^2\leq 2(y^2+z^2)$ gives
\begin{align}\label{sumck3}
\|(\frac{3}{2}-\theta)e^{m+1}+(\theta-\frac12)e^m\|^2 = \|(\theta-\frac12)e^1+\tau\sum_{k=0}^{m} {\hat v}^{k+1-\theta}\|^2 \leq
2\left(\|(\theta-\frac12)e^1\|^2+ \|\tau\sum_{k=0}^{m} {\hat v}^{k+1-\theta}\|^2\right).
\end{align}
Consequently, it follows from \eqref{sumck1}--\eqref{sumck3} that
\begin{align}\label{errorB}
\|(\frac{3}{2}-\theta)e^{m+1}+(\theta-\frac12)e^m\|^2\leq B^m,
\end{align}
where $t_{m+1}^\alpha\leq T^\alpha$ has been used, and
\begin{align*}
B^m=&2\|(\theta-\frac12)e^1\|^2\\
&+4\Gamma(2-\alpha)T^\alpha\left\{|{\hat w}^1|_1^2+\frac{\tau}{\mu}\Big[c_0^{(1)}+\frac1{1-\theta} \sum_{n=1}^m b_n\Big]\|{\hat v}^{1-\theta}\|^2+
\tau\sum_{n=1}^m \frac1{d_n^{(n+1)}}\|(R_f)^{n+1}\|^2\right\}.
\end{align*}
We note that if $\|e^{m+1}\|\leq \|e^{m}\|$, \eqref{induction} follows directly.
Therefore, we only consider the situation that
$$\|e^{m+1}\|\geq \|e^{m}\|.$$
 Then the triangular property of $L_2$ norm yields
\begin{align*}
\|(\frac{3}{2}-\theta)e^{m+1}+(\theta-\frac12)e^m\|
\ge\big(\frac{3}{2}-\theta\big)\|e^{m+1}\|-\big(\theta-\frac12\big)\|e^m\|\geq 2(1-\theta)\|e^{m+1}\|,
\end{align*}
which implies that
\begin{align}\label{error4}
\|\big(\frac{3}{2}-\theta\big)e^{m+1}+\big(\theta-\frac12\big)e^m\|^2\ge 4(1-\theta)^2\|e^{m+1}\|^2.
\end{align}
Combining \eqref{errorB} and \eqref{error4}, we get
\begin{align}\label{error5}
\|e^{m+1}\|^2\leq \frac{B^m}{4(1-\theta)^2}.
\end{align}
Then we estimate $B^m$ term by term.
Recalling the definition of ${\hat w}_i^1$,
a straightforward calculation shows
\begin{align}\label{errorw1}
|{\hat w}^1|_1^2&=(3\theta-2\theta^2-\frac12)^2|e^1|_1^2= \Big[\frac{(3\theta-2\theta^2-\frac12)\Gamma(3-\alpha)}{2\theta^{2-\alpha}}\Big]^2 \tau^{2\alpha}|R^1|_1^2
\leq C_5\tau^4.
\end{align}
By using Lemma \ref{lemma_time}, Lemma \ref{akbk}(b), \eqref{err2} and \eqref{Rn}, we have
\begin{align}\nonumber
\frac{\tau}{\mu}\Big[c_0^{(1)}+\frac1{1-\theta} \sum_{n=1}^m b_n\Big]\|{\hat v}^{1-\theta}\|^2
\leq & \Big[\frac{{t_\theta}^{2-\alpha}}{\Gamma(3-\alpha)}+\frac{t_{n+\theta}^{2-\alpha}}{2(1-\theta)\Gamma(3-\alpha)} \Big]\|{\hat v}^{1-\theta}\|^2\\\nonumber
\leq & \frac{2(1-\theta){t_\theta}^{2-\alpha}+t_{n+\theta}^{2-\alpha}}{2(1-\theta)\Gamma(3-\alpha)} \Big(\frac{1-\theta}{d_0^{(1)}}\Big)^2\|R^1\|^2\\\nonumber
\leq & \frac{(1-\theta)\Gamma(3-\alpha)(3-2\theta)T^{2-\alpha}}{2\theta^{4-2\alpha}}\tau^{2\alpha-2}\|R^1\|^2\\\label{vhattheta}
\leq & C_6 \tau^4.
\end{align}
For the nonlinear term, assuming that the global Lipschitz condition \eqref{Lip} hold, we have
\begin{align}\label{nonlinearterm}
\|f(U^n)-f(u^n)\|\leq L\|U^n-u^n\|=L\|e^n\|.
\end{align}
Then utilizing Lemma \ref{dk}(f), \eqref{Rn} and \eqref{nonlinearterm}, we can conclude that
\begin{align}\nonumber
\tau\sum_{n=1}^m \frac1{d_n^{(n+1)}}\|(R_f)^{n+1}\|^2\leq& 2\tau\sum_{n=1}^m \frac1{d_n^{(n+1)}}\left( \|f(U^n)-f(u^n)\|^2+\|R^{n+1}\|^2\right) \\\label{Rm}
\leq &2L^2\tau\sum_{n=1}^m \frac1{d_n^{(n+1)}}\|e^n\|^2+ \frac{2\Gamma(4-\alpha)T^{\alpha}C_4^2}{(2-\alpha)^2}(\tau^2+h^2)^2.
\end{align}
Thus, \eqref{Rn}, \eqref{e1}, \eqref{error5}--\eqref{vhattheta} and \eqref{Rm} yield 
\begin{align*}
\|e^{m+1}\|^2\leq & \frac{1}{4(1-\theta)^2}\left[ \frac12 \Big(\frac{(\theta-\frac12)C_4\Gamma(3-\alpha)}{\theta^{2-\alpha}}\Big)^2\right.\\
&\left.+4\Gamma(2-\alpha)T^\alpha\Big(C_5+C_6+\frac{2\Gamma(4-\alpha)T^{\alpha}}{(2-\alpha)^2}C_4^2\Big) \right](\tau^2+h^2)^2\\
 &+ \frac{2 \Gamma(2-\alpha)T^\alpha L^2}{(1-\theta)^2}\tau\sum_{n=1}^m \frac1{d_n^{(n+1)}}\|e^n\|^2,
\end{align*}
which shows that \eqref{induction} is proved.

Consequently, we can apply Lemma \ref{GRW} and Lemma \ref{dk}(f) on \eqref{induction} to conclude
$$\|e^{n}\|^2\leq \big[{\bar C}(\tau^2+h^2)\big]^2, \quad 1\leq n\leq N.$$
\end{proof}
\begin{remark}\label{condition}
For functions which are not globally Lipschitz continuous, for example $f(u)=[u(x,t)]^r$, $r$ is positive integer and $r\geq 3$.
 Inspired by the approach for dealing with the nonlinear term in Theorem 4.2 of \cite{zhao_siamJSC},
 one can also obtain convergence of the scheme by assuming $\tau=\nu h^{\frac12+\epsilon}$,  where $\nu$, $\epsilon$ are positive numbers.
 In fact, based on the smoothness assumption of the exact solution, there exists a positive constant $C_0$ such that
\begin{align*}
\|U^n\|_\infty\leq C_0, \quad 0\leq n\leq N.
\end{align*}
Note that \eqref{converesult} is valid for $1\leq n\leq m$ by applying Lemma \ref{GRW} and Lemma \ref{dk}(f) on the inductive assumption of Theorem \ref{convergence}. If ${\bar C}(\nu^2h^{2\epsilon}+h)\leq {\tilde C}_0$, in which ${\tilde C}_0$ is a positive constant independent of $\tau$ and $h$,  then it follows that
$$\|e^n\|_\infty\leq h^{-1}\|e^n\|\leq {\bar C}(\nu^2h^{2\epsilon}+h)\leq {\tilde C}_0, \quad 1\leq n\leq m,$$
and we get
\begin{equation}\label{infinity-bound}
\|u^n\|_\infty=\|u^n-U^n+U^n\|_\infty\leq \|e^n\|_\infty+\|U^n\|_\infty\leq {\tilde C}_0+C_0,\quad 1\leq n\leq m,
\end{equation}
which implies that $u^n$ ($1\leq n\leq m$) is uniformly bounded.
  The bound \eqref{nonlinearterm} can still be obtained so long as  $f$ is Lipschitz continuous on $[-{\tilde C}_0-C_0,{\tilde C}_0+C_0]$.
One can then follow the other parts of the proof to conclude convergence of the scheme.
\end{remark}

\subsection{Stability}
Now we show the stability of propose scheme \eqref{sc1}--\eqref{sc4}. Suppose that $\{{\tilde u}_i^n,0\leq i\leq M,0\leq n\leq N\}$ is the solution of the following difference scheme:
\begin{align}\nonumber
& (1-\theta) \Delta_t^\alpha  {\tilde v}_i^{n+\theta}+\theta \Delta_t^\alpha  {\tilde v}_i^{n-1+\theta}=\delta_x^2 \Big(\frac{{\tilde w}_i^{n+1}+{\tilde w}_i^n}{2}\Big)-f({\tilde u}_i^n) +p_i^n, \\\label{stb1}
&~~~~~~~~~~~~~~~~~~~~~~~~~~~~~~~~~~~~~~~~ 1\leq n\leq N-1,~ 1\leq i\leq M-1,\\\label{stb2}
& \Delta_t^\alpha {\tilde v}_i^\theta=({\tilde \varphi}_{xx}+\theta\tau{\tilde \psi}_{xx})_i-f({\tilde \varphi}_i+\theta\tau{\tilde \psi}_i) +p_i^\theta, \quad 1\leq i\leq M-1,\\\label{stb3}
& {\tilde u}_i^0={\tilde \varphi}_i, ~ ({\tilde u}_t)_i^0={\tilde \psi}_i, \quad  0\leq i\leq M,\\\label{stb4}
& {\tilde u}_0^n={\tilde u}_M^n=0, \quad 1\leq n\leq N,
\end{align}
where
$$(1-\theta) \Delta_t^\alpha  {\tilde v}_i^{n+\theta}+\theta \Delta_t^\alpha  {\tilde v}_i^{n-1+\theta}=\sum_{k=1}^n d_{n-k}^{(n+1)}({\tilde v}_i^{k+1-\theta}-{\tilde v}_i^{k-\theta})+ d_n^{(n+1)}({\tilde v}_i^{1-\theta}-{\tilde \psi}_i),$$
with
\begin{align*}
{\tilde v}_i^{k+1-\theta}=&(2-2\theta)\delta_t {\tilde u}_i^{k+\frac12}+(2\theta-1)\delta_{\hat t} {\tilde u}_i^{ k},\quad k\geq 1,\\
{\tilde v}_i^{1-\theta}=&(2-2\theta)\delta_t {\tilde u}_i^{\frac12}+(2\theta-1){\tilde \psi}_i;
\end{align*}
and
\begin{align*}
{\tilde w}_i^{k}=&(\frac32-\theta)\big[\theta {\tilde u}_i^{k}+(1-\theta){\tilde u}_i^{k-1}\big]+(\theta-\frac12)\big[\theta
{\tilde u}_i^{k-1}+(1-\theta){\tilde u}_i^{k-2}\big], \quad k\geq 2,\\
{\tilde w}_i^{1}=&(\frac32-\theta)\big[\theta {\tilde u}_i^{1}+(1-\theta){\tilde u}_i^{0}\big]+(\theta-\frac12)\big[\theta
{\tilde u}_i^{0}+(1-\theta)({\tilde u}_i^1-2\tau{\tilde \psi}_i)\big].
\end{align*}
Denoting the perturbation term
$$\eta_i^n={\tilde u}_i^n-u_i^n, \quad 1\leq n\leq N,~ 1\leq i\leq M-1,$$
and taking $\xi_i=\big[({\tilde \varphi}_{xx}-\varphi_{xx})_i+\theta\tau({\tilde \psi}_{xx}-\psi_{xx})_i\big]-\big[f({\tilde \varphi}_i+\theta\tau{\tilde \psi}_i)-f(\varphi_i+\theta\tau\psi_i)\big]$.

Then we can get the following perturbation system:
\begin{align}\nonumber
&(1-\theta) \Delta_t^\alpha {\hat \eta}_i^{n+\theta}+\theta \Delta_t^\alpha  {\hat \eta}_i^{n-1+\theta}=\delta_x^2\Big(\frac{{\hat \zeta}_i^{n+1}+{\hat \zeta}_i^n}{2}\Big) -\big[f({\tilde u}_i^n)-f(u_i^n)\big], \\\label{per1}
&~~~~~~~~~~~~~~~~~~~~~~~~~~~~~~~~~~~~~~~~~~\quad 1\leq n\leq N-1,~ 1\leq i\leq M-1,\\\label{per2}
&\frac{d_0^{(1)}}{1-\theta} {\hat \eta}_i^{1-\theta}=\xi_i,\quad 1\leq i\leq M-1,\\\label{per3}
&\eta_0^n=\eta_M^n=0, \quad 1\leq n\leq N,\\\label{per4}
& \eta^0_i={\tilde \varphi}_i-\varphi_i,\quad 0\leq i\leq M.
\end{align}
where
$$(1-\theta) \Delta_t^\alpha {\hat \eta}_i^{n+\theta}+\theta \Delta_t^\alpha  {\hat \eta}_i^{n-1+\theta}=\sum_{k=1}^n d_{n-k}^{(n+1)}({\hat \eta}_i^{k+1-\theta}-{\hat \eta}_i^{k-\theta})+ d_n^{(n+1)}{\hat \eta}_i^{1-\theta},$$
with
\begin{align}\nonumber
{\hat \eta}_i^{k+1-\theta}
=&(2-2\theta)\frac{\eta_i^{k+1}-\eta_i^{k}}{\tau}+(2\theta-1)\frac{\eta_i^{k+1}-\eta_i^{k-1}}{2\tau},\quad k\geq 1,\\\label{eta1theta}
{\hat \eta}_i^{1-\theta}=&(2-2\theta)\frac{\eta_i^{1}-\eta_i^0}{\tau}+(2\theta-1)({\tilde \psi}_i-\psi_i),
\end{align}
and
\begin{align}\nonumber
{\hat \zeta}_i^k=&(\frac32-\theta)\big[\theta \eta_i^{k}+(1-\theta)\eta_i^{k-1}\big]+(\theta-\frac12)\big[\theta
\eta_i^{k-1}+(1-\theta)\eta_i^{k-2}\big],\quad k\geq 2,\\\label{stable1_1}
{\hat \zeta}_i^1=&(\frac32-\theta)\big[\theta \eta_i^1+(1-\theta)\eta_i^0\big]+(\theta-\frac12)\big[\theta
\eta_i^0+(1-\theta)\big(\eta_i^1-2\tau({\tilde \psi}_i-\psi_i)\big)\big].
\end{align}
We have the following theorem to describe the stability of proposed scheme.
\begin{theorem}\label{perstable}
Let $\{\eta_i^n,0\leq i\leq M,0\leq n\leq N\}$ be the solution of the perturbation system \eqref{per1}--\eqref{per4}.  
It holds that
\begin{align}\label{perstable1}
\|\eta^n\|^2\leq {\tilde C}^2\Big(C_8(|{\tilde \varphi}_{xx}- {\varphi}_{xx}|_1^2+|{\tilde \psi}_{xx}-{\psi}_{xx}|_1^2)+C_9|\eta^0|_1^2+C_{10}|{\tilde \psi}-\psi|_1^2 \Big), \quad 0\leq n\leq N,
\end{align}
where ${\tilde C}=\exp(C_7)$, $C_8=\frac1{4(1-\theta)^2}\Big\{\Big(\frac{(\theta-\frac12)(b-a)\Gamma(3-\alpha)}{\theta^{2-\alpha}}\Big)^2+\\ 12\Gamma(2-\alpha)T^\alpha\Big[\Big(\frac{(3\theta-2\theta^2-\frac12)\Gamma(3-\alpha)}{\theta^{2-\alpha}}\Big)^2
+\frac{2(b-a)^2C_5}{9}\Big]\Big\}$, \\ $C_9=\frac1{4(1-\theta)^2}\Big\{{[(2\theta-1)^2(L+1)^2+(\frac32-\theta)^2](b-a)^2}+ \\ 12\Gamma(2-\alpha)T^\alpha\Big[{(3\theta-2\theta^2-\frac12)^2(L^2+1)
+(\frac32-3\theta+2\theta^2)^2}+\frac{2(b-a)^2 C_5L^2}{9}\Big]\Big\}$ and $C_{10}=\frac{1}{4(1-\theta)^2}\Big\{(2\theta-1)^2\Big[6\Big(\frac{(2\theta-1)(b-a)}{2\sqrt{6}(1-\theta)}+L \Big)^2+(b-a)^2 \Big]
+12\Gamma(2-\alpha)T^\alpha\Big[\big(\frac{(2\theta-1)(3\theta-2\theta^2-\frac12)}{2-2\theta}\big)^2(L^2+1)+(3\theta-2\theta^2-1)^2+\frac{2(b-a)^2 C_5L^2}{9}+\frac{(b-a)^2 T^{2-\alpha}}{9\Gamma(3-\alpha)}\Big]\Big\}$.
\end{theorem}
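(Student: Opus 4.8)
The plan is to reproduce, almost verbatim, the energy argument behind Theorem~\ref{convergence}, now for the perturbation system \eqref{per1}--\eqref{per4}; the only genuinely new ingredient is that the discrete initial data $\eta^0=\tilde\varphi-\varphi$ and $\tilde\psi-\psi$ no longer vanish, so they must be propagated through every estimate. As the base of the induction I would first combine \eqref{per2} with \eqref{eta1theta} to write $\eta^1$ as an explicit linear combination of $\eta^0$, the source $\xi$ and $\tilde\psi-\psi$ (using $d_0^{(1)}=\theta^{2-\alpha}\tau^{1-\alpha}/\Gamma(3-\alpha)$); then, applying the Lipschitz bound \eqref{Lip} to the nonlinear part $f(\tilde\varphi+\theta\tau\tilde\psi)-f(\varphi+\theta\tau\psi)$ of $\xi$ and Lemma~\ref{1norm} to pass between $\|\cdot\|$ and $|\cdot|_1$, this controls $\|\eta^1\|$, $|\eta^1|_1$ and hence $|\hat\zeta^1|_1$ through \eqref{stable1_1}, in terms of $|\eta^0|_1$, $|\tilde\varphi_{xx}-\varphi_{xx}|_1$, $|\tilde\psi_{xx}-\psi_{xx}|_1$ and $|\tilde\psi-\psi|_1$. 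This settles the $n=1$ case of the inductive inequality and pins down the coefficients that eventually assemble into $C_8,C_9,C_{10}$.

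For the inductive step, suppose the desired bound holds for $1\le n\le m$. Taking the inner product of \eqref{per1} with $2[\theta\hat\eta^{n+1-\theta}+(1-\theta)\hat\eta^{n-\theta}]=2(\hat\zeta^{n+1}-\hat\zeta^n)/\tau$, using the identity $-2\langle\delta_x^2(\tfrac{\hat\zeta^{n+1}+\hat\zeta^n}{2}),\tfrac{\hat\zeta^{n+1}-\hat\zeta^n}{\tau}\rangle=(|\hat\zeta^{n+1}|_1^2-|\hat\zeta^n|_1^2)/\tau$ and Lemma~\ref{stproof1} with $F^n=-[f(\tilde u^n)-f(u^n)]$ (the only surviving boundary term being the explicit $d_n^{(n+1)}\hat\eta^{1-\theta}$ contribution), exactly as in \eqref{error1_1}--\eqref{error2}, I obtain $E^{n+1}-E^n\le\frac{\tau b_n}{(1-\theta)\mu}\|\hat\eta^{1-\theta}\|^2+\frac{\tau}{d_n^{(n+1)}}\|f(\tilde u^n)-f(u^n)\|^2$ with $E^n=\tau\sum_{k=0}^{n-1}\frac{c_{n-k-1}^{(n)}}{\mu}\|\hat\eta^{k+1-\theta}\|^2+|\hat\zeta^n|_1^2$. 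Summing over $n$ from $1$ to $m$, discarding the nonnegative terms on the left and invoking Lemma~\ref{akbk}(b),(h) as in \eqref{sumck1}--\eqref{sumck2} bounds $\|\tau\sum_{k=0}^m\hat\eta^{k+1-\theta}\|^2$; combined with the telescoping identity that expresses $\tau\sum_{k=0}^m\hat\eta^{k+1-\theta}$ through $\eta^{m+1},\eta^m,\eta^1,\eta^0$ and $\tilde\psi-\psi$ (the perturbation analogue of \eqref{sumck3}, now carrying extra $\eta^0$ and $\tilde\psi-\psi$ terms), this yields $\|(\tfrac32-\theta)\eta^{m+1}+(\theta-\tfrac12)\eta^m\|^2\le B^m$, where $B^m$ is built from the data together with $\tau\sum_{n=1}^m\frac1{d_n^{(n+1)}}\|f(\tilde u^n)-f(u^n)\|^2$.

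Then, exactly as in \eqref{errorB}--\eqref{error5}, I dispose of the trivial case $\|\eta^{m+1}\|\le\|\eta^m\|$ and, otherwise, use $\|(\tfrac32-\theta)\eta^{m+1}+(\theta-\tfrac12)\eta^m\|\ge2(1-\theta)\|\eta^{m+1}\|$ to get $\|\eta^{m+1}\|^2\le B^m/(4(1-\theta)^2)$. Estimating $B^m$ term by term — $\|\eta^1\|^2$ and $|\hat\zeta^1|_1^2$ from the first paragraph, the term $\frac\tau\mu[c_0^{(1)}+\frac1{1-\theta}\sum_n b_n]\|\hat\eta^{1-\theta}\|^2$ via Lemma~\ref{lemma_time} and Lemma~\ref{akbk}(b) as in \eqref{vhattheta} (where $\hat\eta^{1-\theta}$ now also carries the $(2\theta-1)(\tilde\psi-\psi)$ piece, producing the $C_{10}$-type contributions), and the nonlinear sum via $\|f(\tilde u^n)-f(u^n)\|\le L\|\eta^n\|$ together with Lemma~\ref{dk}(f) as in \eqref{Rm} — gives an inductive inequality of the form $\|\eta^{m+1}\|^2\le(\text{data-dependent constant})+\frac{2\Gamma(2-\alpha)T^\alpha L^2}{(1-\theta)^2}\tau\sum_{k=1}^m\frac1{d_k^{(k+1)}}\|\eta^k\|^2$, and applying Lemma~\ref{GRW} and Lemma~\ref{dk}(f) closes the induction and reproduces \eqref{perstable1} with $\tilde C=\exp(C_7)$ and the stated $C_8,C_9,C_{10}$. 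The main difficulty is organizational bookkeeping: tracking the several independent perturbations $\eta^0$, $\tilde\psi-\psi$, $\tilde\varphi_{xx}-\varphi_{xx}$, $\tilde\psi_{xx}-\psi_{xx}$ (the last two entering only through $\xi$) through $\|\eta^1\|$, $|\hat\zeta^1|_1$ and $\|\hat\eta^{1-\theta}\|^2$ so that their squared coefficients collect into exactly $C_8,C_9,C_{10}$; the entire fractional-weight apparatus (Lemmas~\ref{akbk}, \ref{dk}, \ref{stproof1}) is reused unchanged from the convergence proof.
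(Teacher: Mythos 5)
Your plan reproduces the paper's own proof of Theorem \ref{perstable} essentially step for step: the same induction, the same test function $2[\theta\hat\eta^{n+1-\theta}+(1-\theta)\hat\eta^{n-\theta}]=2(\hat\zeta^{n+1}-\hat\zeta^n)/\tau$, the same energy identity and Lemma \ref{stproof1}, the same telescoping and case split on $\|\eta^{m+1}\|$ versus $\|\eta^m\|$, and the same Gronwall closure. The only substantive point where your bookkeeping deviates from what the equations force is the boundary term in Lemma \ref{stproof1}. Subtracting scheme \eqref{sc1} from \eqref{stb1}, the first-level contribution to the discrete fractional operator is $d_n^{(n+1)}\big(\hat\eta_i^{1-\theta}-(\tilde\psi_i-\psi_i)\big)$, not $d_n^{(n+1)}\hat\eta_i^{1-\theta}$, because the original scheme carries $d_n^{(n+1)}(v^{1-\theta}-\psi)$ while the perturbed one carries $d_n^{(n+1)}(\tilde v^{1-\theta}-\tilde\psi)$. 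Hence Lemma \ref{stproof1} must be applied with $v^0=\tilde\psi-\psi\neq 0$, which turns your term $\frac{\tau}{d_n^{(n+1)}}\|f(\tilde u^n)-f(u^n)\|^2$ into
\[
\tau\, d_n^{(n+1)}\Big\|(\tilde\psi-\psi)+\tfrac{1}{d_n^{(n+1)}}(\tilde R_f)^{n+1}\Big\|^2
\leq \frac{(b-a)^2T^{2-\alpha}}{3\Gamma(3-\alpha)}|\tilde\psi-\psi|_1^2+2L^2\tau\sum_{n=1}^m\frac{1}{d_n^{(n+1)}}\|\eta^n\|^2 ,
\]
using Lemma \ref{dk}(d). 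This is precisely where the $\frac{(b-a)^2T^{2-\alpha}}{9\Gamma(3-\alpha)}$ piece of $C_{10}$ originates; attributing the $\tilde\psi-\psi$ influence only to $\hat\eta^{1-\theta}$ and to the telescoping identity, as you do, would miss it. This is a mechanical fix rather than a flaw in strategy, but without it the stated constant $C_{10}$ is not recovered.
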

\begin{proof}
Obviously, \eqref{perstable1} is valid for $n=0$.
We use mathematical induction once again to prove
\begin{align}\nonumber
\|\eta^{n}\|^2\leq &C_8(|{\tilde \varphi}_{xx}- {\varphi}_{xx}|_1^2+|{\tilde \psi}_{xx}-{\psi}_{xx}|_1^2)+C_9|\eta^0|_1^2+C_{10}|{\tilde \psi}-\psi|_1^2\\\label{induc_stable}
&+\frac{2\Gamma(2-\alpha)T^{\alpha}L^2}{(1-\theta)^2}\tau\sum_{k=0}^{n-1}\frac1{d_k^{(k+1)}} \|\eta^k\|^2,\quad
1\leq n\leq N.
\end{align}
It follows from \eqref{per2}, \eqref{eta1theta}, and Lemma \ref{1norm} that
\begin{align}\nonumber
\|\eta^1\|&=\|\frac{\tau}{2d_0^{(1)}}\xi-\frac{(2\theta-1)\tau}{2-2\theta}({\tilde \psi-\psi})+\eta^0\|\leq \frac{\tau}{2d_0^{(1)}}\|\xi\|+\frac{(2\theta-1)\tau}{2-2\theta}\|{\tilde \psi}-\psi\|+\|\eta^0\|\\\label{stable1_21}
&\leq \frac{(b-a)\Gamma(3-\alpha)\tau^\alpha}{2\sqrt{6}\theta^{2-\alpha}}|\xi|_1+\frac{(2\theta-1)(b-a)\tau}{2\sqrt{6}(1-\theta)}|{\tilde \psi}-\psi|_1+\frac{b-a}{\sqrt{6}}|\eta^0|_1 .
\end{align}
Note that
$$|f({\tilde \varphi}_i+\theta\tau{\tilde \psi}_i)-f(\varphi_i+\theta\tau\psi_i)|\leq L|{\tilde \varphi}_i-\varphi_i+\theta\tau({\tilde \psi}_i-\psi_i)|\leq L(|\eta_i^0|+\theta\tau|{\tilde \psi}_i-\psi_i|),$$
and then (for sufficiently small $\tau$)
\begin{align}\label{xi}
|\xi|_1\leq |{\tilde \varphi}_{xx}- {\varphi}_{xx}|_1+|{\tilde \psi}_{xx}-{\psi}_{xx}|_1+L(|\eta^0|_1+\tau|{\tilde \psi}-\psi|_1).
\end{align}
Combining \eqref{stable1_21} and \eqref{xi}, we get
\begin{align}\nonumber
\|\eta^1\|\leq& \frac{(b-a)\Gamma(3-\alpha)\tau^\alpha}{2\sqrt{6}\theta^{2-\alpha}}(|{\tilde \varphi}_{xx}- {\varphi}_{xx}|_1+|{\tilde \psi}_{xx}-{\psi}_{xx}|_1)+\Big(\frac{(2\theta-1)(b-a)}{2\sqrt{6}(1-\theta)}+L\Big)\tau|{\tilde \psi}-\psi|_1\\\label{stable1_2}
&+\frac{b-a}{\sqrt{6}}(L+1)|\eta^0|_1 .
\end{align}
Since $\sqrt{C_9}>\frac{\sqrt{6}(b-a)}{3}(L+1)$, then \eqref{induc_stable} is valid for $n=1$.

Suppose \eqref{induc_stable} hold for $1\leq n\leq m$ ($1\leq m\leq N-1$). Now we show \eqref{induc_stable} also hold for $n=m+1$.

Taking the inner product of \eqref{per1} by
$$ 2\big[\theta {\hat \eta}_i^{n+1-\theta} +(1-\theta){\hat \eta}_i^{n-\theta}\big]=2\Big(\frac{{\hat \zeta}_i^{n+1}-{\hat \zeta}_i^n}{\tau}\Big), \quad 1\leq n\leq m,$$
we get
$$2\big\langle (1-\theta) \Delta_t^\alpha {\hat \eta}^{n+\theta}+\theta \Delta_t^\alpha  {\hat \eta}^{n-1+\theta}-{\tilde R}_f^{n+1},\theta {\hat \eta}^{n+1-\theta} +(1-\theta){\hat \eta}^{n-\theta}\big\rangle=2\big\langle \delta_x^2 \big( \frac{{\hat \zeta}^{n+1}+{\hat \zeta}^n}{2} \big),\frac{{\hat \zeta}^{n+1}-{\hat \zeta}^n}{\tau} \big\rangle  ,$$
where $({\tilde R}_f)_i^{n+1}=-\big[f({\tilde u}_i^n)-f(u_i^n)\big]$.

Then following the similar methodology in the proof of Theorem \ref{convergence}, we can obtain
\begin{align}\label{etan}
\|\eta^{m+1}\|^2\leq \frac{{\tilde B}^m}{4(1-\theta)^2},
\end{align}
where
\begin{align}\nonumber
{\tilde B}^m
=&6\big(\|(\theta-\frac12)\eta^1\|^2+\|(\frac32-\theta)\eta^0\|^2+\|\tau(2\theta-1)({\tilde\psi}-\psi)\|^2\big)+4\Gamma(2-\alpha)T^\alpha\Big\{|{\hat \zeta}^1|_1^2\\\label{Btilde}
&+\frac{\tau}{\mu}\Big[c_0^{(1)}+\frac1{(1-\theta)} \sum_{n=1}^m b_n\Big]{\|{\hat \eta}^{1-\theta}\|}^2+
\tau\sum_{n=1}^m d_n^{(n+1)}\big\|({\tilde\psi}-\psi)+\frac1{d_n^{(n+1)}}({\tilde R}_f)^{n+1}\big\|^2\Big\}.
\end{align}
Applying \eqref{stable1_1}, \eqref{eta1theta}, \eqref{xi} and Cauchy-Schwarz inequality, we have
\begin{align}\nonumber
|{\hat \zeta}^1|_1^2\leq & \Big[(3\theta-2\theta^2-\frac12)|\eta^1|_1+(\frac32-3\theta+2\theta^2)|\eta^0|_1+(3\theta-2\theta^2-1)\tau|{\tilde \psi}-\psi|_1 \Big]^2\\\nonumber
\leq & 3\Big\{\Big[\frac{(3\theta-2\theta^2-\frac12)\Gamma(3-\alpha)}{\theta^{2-\alpha}}\Big]^2{\tau^{2\alpha}}(|{\tilde \varphi}_{xx}- {\varphi}_{xx}|_1^2+|{\tilde \psi}_{xx}-{\psi}_{xx}|_1^2) \\\nonumber &+\Big[(3\theta-2\theta^2-\frac12)^2(L^2+1)+(\frac32-3\theta+2\theta^2)^2\Big]|\eta^0|_1^2\\\label{zeta1}
&+\Big[\Big(\frac{(2\theta-1)(3\theta-2\theta^2-\frac12)}{2-2\theta}\Big)^2(L^2+1)+(3\theta-2\theta^2-1)^2\Big]\tau^2|{\tilde \psi}-\psi|_1^2\Big\}.
\end{align}
Note that
\begin{align}\label{etahat}
\frac{\tau}{\mu}\Big[c_0^{(1)}+\frac1{(1-\theta)} \sum_{n=1}^m b_n\Big]{\|{\hat \eta}^{1-\theta}\|}^2\leq C_5\|\xi\|^2\leq \frac{C_5(b-a)^2}{6}|\xi|_1^2,
\end{align}
and the globally Lipschitz continuity \eqref{Lip} yields
$$\|f({\tilde u}^n)-f(u^n)\|\leq L \|{\tilde u}^n-u^n\|=L\|\eta^n\|,$$
and then
\begin{align}\label{etahat}
\tau\sum_{n=1}^m d_n^{(n+1)}\big\|({\tilde\psi}-\psi)+\frac1{d_n^{(n+1)}}({\tilde R}_f)^{n+1}\big\|^2\leq & \frac{(b-a)^2 T^{2-\alpha}}{3\Gamma(3-\alpha)}|{\tilde\psi}-\psi|_1^2+2L^2\tau\sum_{n=1}^m\frac1{d_n^{(n+1)}} \|\eta^n\|^2.
\end{align}
So we can derive form combining \eqref{xi}--\eqref{etahat} and Lemma \ref{1norm} that
{\small{\begin{align*}
\|\eta^{m+1}\|^2&\leq C_8(|{\tilde \varphi}_{xx}- {\varphi}_{xx}|_1^2+|{\tilde \psi}_{xx}-{\psi}_{xx}|_1^2)+C_9|\eta^0|_1^2+C_{10}|{\tilde \psi}-\psi|_1^2+\frac{2\Gamma(2-\alpha)T^{\alpha}L^2}{(1-\theta)^2}\tau\sum_{n=1}^m\frac1{d_n^{(n+1)}} \|\eta^n\|^2.\\
&\leq C_8(|{\tilde \varphi}_{xx}- {\varphi}_{xx}|_1^2+|{\tilde \psi}_{xx}-{\psi}_{xx}|_1^2)+C_9|\eta^0|_1^2+C_{10}|{\tilde \psi}-\psi|_1^2+\frac{2\Gamma(2-\alpha)T^{\alpha}L^2}{(1-\theta)^2}\tau\sum_{n=0}^m\frac1{d_n^{(n+1)}} \|\eta^n\|^2,
\end{align*}}}
which shows that \eqref{induc_stable} is proved.

Therefore, applying Lemma \ref{GRW} and Lemma \ref{dk}(f) on \eqref{induc_stable}, we finally get
{\small{\begin{align*}
\|\eta^{n}\|^2\leq \exp(2C_7)\Big(C_8(|{\tilde \varphi}_{xx}- {\varphi}_{xx}|_1^2+|{\tilde \psi}_{xx}-{\psi}_{xx}|_1^2)+C_9|\eta^0|_1^2+C_{10}|{\tilde \psi}-\psi|_1^2 \Big),~ 1\leq n\leq N.
\end{align*}}}
\end{proof}

\begin{remark}
For nonlinear terms which are locally Lipschitz continuous as discussed in Remark \ref{condition}, we assume that
$$|\eta^0|_1\leq C_{11} h^{1+\delta},\quad |{\tilde \psi}-\psi|_1\leq C_{11}h^{1+\delta},\quad |{\tilde \varphi}_{xx}- {\varphi}_{xx}|_1\leq C_{11} h^{1+\delta},\quad|{\tilde \psi}_{xx}-{\psi}_{xx}|_1\leq C_{11} h^{1+\delta},$$
where $C_{11}$ and $\delta$ are positive constants.
Note that \eqref{perstable1} is valid for $1\leq n\leq m$ by applying Lemma \ref{GRW} and Lemma \ref{dk}(f) on \eqref{induc_stable}.
If ${\tilde C}C_{11}\sqrt{C_8+C_9+C_{10}}h^\delta\leq {\tilde C}_0$, it follows that
$$\|\eta^n\|_\infty\leq h^{-1}\|\eta^n\|\leq {\tilde C}C_{11}\sqrt{C_8+C_9+C_{10}}h^\delta\leq {\tilde C}_0,\quad 1\leq n\leq m.$$
Noticing \eqref{infinity-bound},
then
$$\|{\tilde u}^n\|_\infty=\|{\tilde u}^n-u^n+u^n\|_\infty\leq \|\eta^n\|_\infty+\|u^n\|_\infty\leq 2{\tilde C}_0+C_0,\quad 1\leq n\leq m,$$
which implies that ${\tilde u}^n$ is uniformly bounded.
If  $f$ is Lipschitz continuous on an interval containing $[-2{\tilde C}_0-C_0,2{\tilde C}_0+C_0]$,
one can still obtain the desired conclusion.
\end{remark}

\section{Compact scheme}\label{compactscheme}
In this section, we propose the spacial fourth-order scheme for the problem \eqref{eq1}--\eqref{eq3}.
For any $u\in \mathcal{V}_h$, we define the spatial high order operator ${\cal A}$ as follow:
$${\cal A}u_i=\left\{\begin{array}{ll}
 \frac1{12}(u_{i-1}+10u_i+u_{i+1}), &1\leq i\leq M-1,\\
 u_i, &i=0,~M,\end{array}\right.$$
and we define the corresponding norm:
$$\|u\|_A=\sqrt{\langle {\cal A}u,u\rangle}.$$
Then it is easy to check that
\begin{align}\label{uAnorm}
\frac23\|u\|\leq \|u\|_A\leq \|u\|.
\end{align}
By Taylor expansion, if $\frac{\partial^6 u}{\partial x^6}$ is continuous on $[x_{i-1},x_{i+1}]$, we have
\begin{align}\label{compactA}
{\cal A}\frac{\partial^2 u}{\partial x^2}(x_i)=\delta_x^2 u(x_i)+O(h^4).
\end{align}
Performing the compact operator ${\cal A}$ on both sides of \eqref{eeq1} and following the derivation of the difference scheme \eqref{sc1}--\eqref{sc4} in section \ref{derivation}, we obtain the compact difference linearized scheme for \eqref{eq1}--\eqref{eq3}:
\begin{align}\nonumber
& (1-\theta) \Delta_t^\alpha {\cal A} v_i^{n+\theta}+\theta \Delta_t^\alpha {\cal A} v_i^{n-1+\theta}=\delta_x^2 \big(\frac{w_i^{n+1}+w_i^n}{2}\big)-{\cal A}f(u_i^n) +{\cal A}p_i^n, \\\label{Csc1}
&~~~~~~~~~~~~~~~~~~~~~~~~~~~~~~~~~~~~~~~~~~~~~~~~~ 1\leq n\leq N-1,~ 1\leq i\leq M-1,\\\label{Csc2}
& \Delta_t^\alpha v_i^\theta=(\varphi_{xx}+\theta\tau\psi_{xx})_i-f(\varphi_i+\theta\tau\psi_i)
+p_i^\theta, \quad 1\leq i\leq M-1,\\\label{Csc3}
& u_0^n=u_M^n=0, \quad 1\leq n\leq N,\\\label{Csc4}
& u_i^0=\varphi_i,\quad v_i^0=\psi_i,\quad  0\leq i\leq M.
\end{align}
Using similar approach with the proof of Theorem 3.7 in \cite{Vong_numer} or Lemma 4.2 in \cite{Liao2}, we have the following Lemma.
\begin{lemma}\label{compactvtheta}
For any real sequence $F^n$, the following estimate holds:
\begin{align*}
&2\big\langle\theta v^{n+1-\theta} +(1-\theta)v^{n-\theta} ,(1-\theta) \Delta_t^\alpha {\cal A}v^{n+\theta}+\theta \Delta_t^\alpha {\cal A} v^{n-1+\theta}- {\cal A}F^n \big\rangle\\
\geq& \sum_{k=1}^n d_{n-k}^{(n+1)} \big( \|v^{k+1-\theta}\|_A^2-\|v^{k-\theta}\|_A^2\big)+d_n^{(n+1)}\big( \|v^{1-\theta}\|_A^2-\|v^0+\frac1{d_n^{(n+1)}}F^n\|_A^2\big)\\
=& \sum_{k=0}^n \frac{c_{n-k}^{(n+1)}}{\mu} \|v^{k+1-\theta}\|_A^2- \sum_{k=0}^{n-1} \frac{c_{n-k-1}^{(n)}}{\mu} \|v^{k+1-\theta}\|_A^2 -\frac{ b_n}{(1-\theta)\mu}\|v^{1-\theta}\|_A^2-d_n^{(n+1)}\|v^0+\frac1{d_n^{(n+1)}}F^n \|_A^2.
\end{align*}
\end{lemma}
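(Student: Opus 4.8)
The plan is to reproduce the argument for Lemma~\ref{stproof1} with the scalar square and scalar product replaced by $\|\cdot\|_A^2$ and the bilinear form $(u,w)_A:=\langle\mathcal{A}u,w\rangle$. First I would put the discrete fractional operator into telescoping form: by linearity of $\mathcal{A}$ and of $\Delta_t^\alpha$ together with the rearrangement leading to \eqref{timedisc},
\begin{align*}
(1-\theta)\Delta_t^\alpha\mathcal{A}v^{n+\theta}+\theta\Delta_t^\alpha\mathcal{A}v^{n-1+\theta}-\mathcal{A}F^n=\sum_{k=0}^n d_{n-k}^{(n+1)}\,\mathcal{A}\big(y^{k+1}-y^k\big),
\end{align*}
where $y^k:=v^{k-\theta}$ for $k\ge1$ and $y^0:=v^0+\tfrac1{d_n^{(n+1)}}F^n$, and also $\theta v^{n+1-\theta}+(1-\theta)v^{n-\theta}=\theta y^{n+1}+(1-\theta)y^n$. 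Since the compact operator $\mathcal{A}$ is symmetric and, by \eqref{uAnorm}, positive definite on the finite-dimensional space $\mathcal{V}_h$, the form $(\cdot,\cdot)_A$ is a genuine inner product with induced norm $\|\cdot\|_A$, and the left-hand side of the asserted estimate is exactly $2\big(\theta y^{n+1}+(1-\theta)y^n,\ \sum_{k=0}^n d_{n-k}^{(n+1)}(y^{k+1}-y^k)\big)_A$.

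The main step, and the only genuine obstacle, is to run Lemma~\ref{alik} in this weighted inner product rather than in $\IR$, with $\sigma=\theta=\tfrac{3-\alpha}{2}\in(\tfrac12,1)$ and $g_k^{(n+1)}=d_k^{(n+1)}$; its two hypotheses hold because strict monotonicity $d_0^{(n+1)}>\cdots>d_n^{(n+1)}$ is Lemma~\ref{dk}(b) and $(2\theta-1)d_0^{(n+1)}-\theta d_1^{(n+1)}>0$ is Lemma~\ref{dk}(c). To legitimize the vector-valued version I would diagonalize $\mathcal{A}$: expanding $y^k=\sum_j y_j^k\phi_j$ in an $\langle\cdot,\cdot\rangle$-orthonormal eigenbasis $\{\phi_j\}$ with eigenvalues $\lambda_j>0$, one has $(u,w)_A=\sum_j\lambda_j u_j w_j$ and $\|u\|_A^2=\sum_j\lambda_j u_j^2$, so the inequality decouples over $j$ into scalar instances of Lemma~\ref{alik} for the sequences $\{y_j^k\}$; multiplying each by $\lambda_j>0$ and summing gives
\begin{align*}
2\Big(\theta y^{n+1}+(1-\theta)y^n,\ \sum_{k=0}^n d_{n-k}^{(n+1)}(y^{k+1}-y^k)\Big)_A\ \ge\ \sum_{k=0}^n d_{n-k}^{(n+1)}\big(\|y^{k+1}\|_A^2-\|y^k\|_A^2\big).
\end{align*}
Rewriting $y^k$ back in terms of $v^{k-\theta}$, $v^0$ and $F^n$ yields precisely the first inequality of the lemma. (Alternatively, one may simply note that the proof of Lemma~\ref{alik} in \cite{Alikhanov} is purely quadratic and therefore valid verbatim in any real inner product space, as in \cite{Vong_numer,Liao2}.)

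For the displayed equality I would re-index the telescoping sum $\sum_{k=1}^n d_{n-k}^{(n+1)}(\|v^{k+1-\theta}\|_A^2-\|v^{k-\theta}\|_A^2)+d_n^{(n+1)}\|v^{1-\theta}\|_A^2$ and substitute the definitions \eqref{dkn}, namely $d_m^{(n+1)}=c_m^{(n+1)}/\mu$ for $0\le m\le n-1$ and $d_n^{(n+1)}=\tfrac1\mu\big(c_n^{(n+1)}-\tfrac{\theta}{1-\theta}b_n\big)$. Using Lemma~\ref{akbk}(g), i.e. $c_m^{(n+1)}=c_m^{(n)}$ for $0\le m\le n-2$ and $c_{n-1}^{(n+1)}=c_{n-1}^{(n)}+b_n$, and collecting the coefficient of $\|v^{1-\theta}\|_A^2$ as $-\tfrac{b_n}{\mu}\big(1+\tfrac{\theta}{1-\theta}\big)=-\tfrac{b_n}{(1-\theta)\mu}$, produces the right-hand expression. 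This bookkeeping is term-for-term identical to the computation already carried out in the proof of Lemma~\ref{stproof1} and introduces no new estimate; the compact operator enters only through the passage to the $(\cdot,\cdot)_A$ inner product, which is exactly what the positive definiteness recorded in \eqref{uAnorm} secures.
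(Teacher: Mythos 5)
Your proposal is correct and follows exactly the route the paper intends: the paper omits the proof of Lemma \ref{compactvtheta}, deferring to \cite{Vong_numer,Liao2}, and the intended argument is precisely to rerun Lemma \ref{stproof1} in the inner product $\langle\mathcal{A}\cdot,\cdot\rangle$, using the symmetry and positive definiteness of $\mathcal{A}$ so that the scalar inequality of Lemma \ref{alik} lifts to the $\|\cdot\|_A$ setting. Your diagonalization justification of that lift and the coefficient bookkeeping via \eqref{dkn} and Lemma \ref{akbk}(g) are both sound and term-for-term consistent with the paper's proof of Lemma \ref{stproof1}.
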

Then, with the help of \eqref{uAnorm}, \eqref{compactA} and Lemma \ref{compactvtheta}, and under the same assumptions in Theorem \ref{convergence} and Theorem \ref{perstable}, the theoretical results can be obtained following  similar arguments in the proof of Theorem \ref{convergence} and Theorem \ref{perstable}.
 We present the convergence conclusion in the following.
\begin{theorem}\label{compconverge}
Let $u(x,t)$ be the solution of the problem \eqref{eq1}--\eqref{eq3} and smooth enough, and let $\{u_i^n,0\leq i\leq M,0\leq n\leq N\}$ be the solution of the scheme \eqref{Csc1}--\eqref{Csc4}. 
 If \eqref{Lip} holds globally, we then have
\begin{align}\label{compconveresult}
\|e^n\|\leq {\hat C}(\tau^2+h^4),\quad 1\leq n\leq N,
\end{align}
where ${\hat C}=\frac{3\exp(\frac94C_7)}{4(1-\theta)}\Big[ \frac12 \Big(\frac{(\theta-\frac12)C_4\Gamma(3-\alpha)}{\theta^{2-\alpha}}\Big)^2+4\Gamma(2-\alpha)T^\alpha\Big(C_5+C_6
+\frac{2\Gamma(4-\alpha)T^{\alpha}}{(2-\alpha)^2}C_4^2\Big) \Big]^{\frac12}$.
\end{theorem}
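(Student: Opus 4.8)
\textbf{Proof proposal for Theorem \ref{compconverge}.}
The plan is to mirror the proof of Theorem \ref{convergence} almost verbatim, with every $L_2$ inner product and $\|\cdot\|$-norm on the nonlinear/source side replaced by the $\langle {\cal A}\,\cdot,\cdot\rangle$ inner product and $\|\cdot\|_A$ norm, while keeping the second-difference operator $\delta_x^2$ (hence the seminorm $|\cdot|_1$) on the ``elliptic'' side as before. First I would write down the error system for \eqref{Csc1}--\eqref{Csc4}: with $e_i^n=U_i^n-u_i^n$, the spatial truncation error is now $(R_x)_i^n={\cal O}(h^4)$ by \eqref{compactA}, while $({\hat R}_t)_i^n$, $(R_w)_i^n$, $({\tilde R}_t)_i^{n+1}$ and the first-level term $R_i^1$ are exactly as in Section \ref{derivation}; so the analogue of \eqref{Rn} reads $\|R^1\|_A\le\|R^1\|\le C_4\tau^{3-\alpha}$, $|R^1|_1\le C_4\tau^{3-\alpha}$ and $\|R^{n+1}\|_A\le\|R^{n+1}\|\le C_4(\tau^2+h^4)$. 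The induction hypothesis becomes the obvious modification of \eqref{induction} with $(\tau^2+h^2)^2$ replaced by $(\tau^2+h^4)^2$.

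Next I would run the energy argument: take the inner product of the compact error equation with $2[\theta{\hat v}^{n+1-\theta}+(1-\theta){\hat v}^{n-\theta}]=2({\hat w}^{n+1}-{\hat w}^n)/\tau$. The right-hand side still produces $-2\langle\delta_x^2((\hat w^{n+1}+\hat w^n)/2),(\hat w^{n+1}-\hat w^n)/\tau\rangle=(|\hat w^{n+1}|_1^2-|\hat w^n|_1^2)/\tau$ as in \eqref{wn1norm}, because the $\delta_x^2$ term is untouched by ${\cal A}$. On the left-hand side, Lemma \ref{compactvtheta} (in place of Lemma \ref{stproof1}) gives exactly the same telescoping lower bound but with $\|\cdot\|$ replaced by $\|\cdot\|_A$. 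This yields the $\|\cdot\|_A$-analogue of \eqref{error2}--\eqref{errorB}; summing in $n$ and using Cauchy--Schwarz together with Lemma \ref{akbk}(h) reproduces \eqref{sumck1}--\eqref{sumck3} and hence $\|(\tfrac32-\theta)e^{m+1}+(\theta-\tfrac12)e^m\|_A^2\le B^m$, where $B^m$ has the same shape as before with $\|{\hat w}^1|_1$, $\|{\hat v}^{1-\theta}\|$, and $\|(R_f)^{n+1}\|$ now measured appropriately (the $\hat w$-seminorm term is unchanged, the others carry $\|\cdot\|_A\le\|\cdot\|$). For the nonlinear term one uses $\|{\cal A}(f(U^n)-f(u^n))\|\le\|f(U^n)-f(u^n)\|\le L\|e^n\|$ via \eqref{uAnorm}, exactly as in \eqref{nonlinearterm}--\eqref{Rm}. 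Finally, in the branch $\|e^{m+1}\|\ge\|e^m\|$, the lower bound $\|(\tfrac32-\theta)e^{m+1}+(\theta-\tfrac12)e^m\|_A\ge\tfrac23\|(\tfrac32-\theta)e^{m+1}+(\theta-\tfrac12)e^m\|\ge\tfrac23\cdot 2(1-\theta)\|e^{m+1}\|$ from \eqref{uAnorm} replaces \eqref{error4}; this is where the extra factor $\tfrac32$ (squared, giving $\tfrac94$) enters both the leading constant and the Gronwall exponent, producing ${\hat C}$ with $\tfrac94 C_7$ in the exponent and the $3/(4(1-\theta))$ prefactor. Closing the induction with Lemma \ref{GRW} and Lemma \ref{dk}(f) gives \eqref{compconveresult}.

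The one genuinely new ingredient is Lemma \ref{compactvtheta}, i.e.\ that the weighted-shifted Alikhanov-type inequality survives composition with the compact operator ${\cal A}$; this is asserted to follow from the arguments in \cite{Vong_numer,Liao2} and is the step I would check most carefully, since ${\cal A}$ must be symmetric positive definite on ${\cal V}_h$ for $\|\cdot\|_A$ to be a norm and for $\langle{\cal A}y^{k+1-\theta}-{\cal A}y^{k-\theta},\,\cdot\,\rangle$ to telescope into squared $\|\cdot\|_A$ differences exactly as the scalar identity in Lemma \ref{alik} does. The remaining obstacle is purely bookkeeping: one must verify that every place where \eqref{converesult}'s proof used $\|R^{n+1}\|\le C_4(\tau^2+h^2)$ only the spatial order changed (to $h^4$) and nothing in the constants $C_5,C_6,C_7$ depends on that exponent, so they carry over unchanged, with ${\hat C}$ differing from ${\bar C}$ only through the $\tfrac94$ and the $3/2$ factors traced above. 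I do not expect any essential difficulty beyond this transcription and the verification of Lemma \ref{compactvtheta}.
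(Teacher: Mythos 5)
Your proposal is correct and follows essentially the same route the paper intends: the paper itself only sketches the proof of Theorem \ref{compconverge} by saying it repeats the argument of Theorem \ref{convergence} with \eqref{uAnorm}, \eqref{compactA} and Lemma \ref{compactvtheta} in place of Lemma \ref{stproof1}, and your reconstruction fills in exactly those steps. In particular, your tracing of the norm-equivalence factor $\tfrac23$ through the lower bound $\|(\tfrac32-\theta)e^{m+1}+(\theta-\tfrac12)e^m\|_A\geq \tfrac43(1-\theta)\|e^{m+1}\|$ correctly reproduces both the $\tfrac{3}{4(1-\theta)}$ prefactor and the $\exp(\tfrac94 C_7)$ in the stated constant ${\hat C}$, confirming the bookkeeping is consistent with the theorem.
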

\begin{remark}
As in Remark \ref{condition}, we can show that \eqref{compconveresult} holds for locally Lipschitz continuous nonlinear term,
if we assume ${\hat C}(\nu^2h^{2\epsilon}+h^3)\leq {\tilde C}_0$.
\end{remark}
\section{Numerical experiments}\label{Numericalexperiments}
 In this section, we carry out numerical experiments for the proposed finite difference schemes \eqref{sc1}--\eqref{sc4} and \eqref{Csc1}--\eqref{Csc4} to illustrate our theoretical statements. All our tests were done in MATLAB R2014a with a desktop computer (Dell optiplex 7020, configuration: Intel(R) Core(TM) i7-4790 CPU 3.60GHz and 16.00G RAM).

 The $L_2$ norm errors between the exact and the numerical solutions
 $$E_{2}(\tau,h)=\max_{0\leq n\leq N}\|e^n\|$$
 are shown in the following tables.
 In the tables,
 $$\mbox{Rate1}=\log_2\bigg(\dfrac{E_{2}(2\tau,h)}{E_{2}(\tau,h)}\bigg)$$
 is used to denote the temporal convergence order for sufficiently small $h$, and
 $$\mbox{Rate2}=\log_2\bigg(\dfrac{E_{2}(\tau,2h)}{E_{2}(\tau,h)}\bigg)$$
 is  the spatial convergence order for sufficiently small $\tau$.

\subsection{Accuracy verification}\label{Accuracy}
 We consider the problem \eqref{eq1}--\eqref{eq3} for $x\in[0,1]$, $T=1$ and the forcing term
 $$p(x,t)=\left[\frac{24}{\Gamma(5-\alpha)}t^{4-\alpha}+\pi^2(t^4+1) \right]\sin(\pi x)+f\big(u(x,t)\big),$$
is chosen to such that the exact solution is $u(x,t)=\sin(\pi x)(t^4+1)$, where
 \begin{align*}
 &\mbox{\bf Case 1}\quad f\big(u(x,t)\big)=2\left(u(x,t)\right)^3 ,\\
 &\mbox{\bf Case 2}\quad f\big(u(x,t)\big)=\sin\big(u(x,t)\big), \quad\mbox{(sin-Gordon)} ,\\
 &\mbox{\bf Case 3}\quad f\big(u(x,t)\big)=\left[\big(u(x,t)\big)^2+5\right]^{\frac12}.
 \end{align*}
The numerical results for the above three cases by applying difference scheme \eqref{sc1}--\eqref{sc4} were recorded in Table \ref{table1} and Table \ref{table2}, while the results for the three cases by applying compact scheme \eqref{Csc1}--\eqref{Csc4} were presented in Table \ref{table3} and Table \ref{table4}.

  \begin{table}[hbt!]
 \begin{center}
 \caption{Numerical accuracy in temporal direction of scheme \eqref{sc1}--\eqref{sc4} with $h=\frac1{1000}$. }\label{table1}
 \renewcommand{\arraystretch}{1}
 \def\temptablewidth{1\textwidth}
 {\rule{\temptablewidth}{0.9pt}}
 \begin{tabular*}{\temptablewidth}{@{\extracolsep{\fill}}cccccccc}
   &$\tau$   &\multicolumn{2}{c}{$\alpha=1.2$}&\multicolumn{2}{c}{$\alpha=1.5$}
              &\multicolumn{2}{c}{$\alpha=1.8$}\\
         \cline{3-4}             \cline{5-6}             \cline{7-8}\\
             &&$E_2(\tau,h)$& Rate1   &$E_2(\tau,h)$& Rate1   &$E_2(\tau,h)$& Rate1  \\\hline
             &$1/20$     & 2.5994e-03  & $\ast$  & 3.0095e-03  & $\ast$  & 3.0680e-03  & $\ast$ \\
 {\bf Case 1}&$1/40$     & 6.5070e-04  & 1.9981  & 7.5142e-04  & 2.0018  & 7.6428e-04  & 2.0051 \\
             &$1/80$     & 1.6242e-04  & 2.0023  & 1.8761e-04  & 2.0019  & 1.9053e-04  & 2.0041 \\
             &$1/160$    & 4.0295e-05  & 2.0110  & 4.6593e-05  & 2.0095  & 4.7242e-05  & 2.0119 \\
     \hline
             &$1/20$     & 5.4877e-03  & $\ast$  & 5.5724e-03  & $\ast$  & 4.7836e-03  & $\ast$ \\
 {\bf Case 2}&$1/40$     & 1.3773e-03  & 1.9944  & 1.3994e-03  & 1.9935  & 1.1950e-03  & 2.0011 \\
             &$1/80$     & 3.4422e-04  & 2.0004  & 3.4977e-04  & 2.0003  & 2.9756e-04  & 2.0058 \\
             &$1/160$    & 8.5409e-05  & 2.0109  & 8.6803e-05  & 2.0106  & 7.3449e-05  & 2.0184 \\
                  \hline
             &$1/20$     & 5.0042e-03  & $\ast$  & 5.2116e-03  & $\ast$  & 4.5829e-03  & $\ast$ \\
 {\bf Case 3}&$1/40$     & 1.2539e-03  & 1.9967  & 1.3072e-03  & 1.9953  & 1.1442e-03  & 2.0019 \\
             &$1/80$     & 3.1327e-04  & 2.0010  & 3.2666e-04  & 2.0006  & 2.8493e-04  & 2.0057 \\
             &$1/160$    & 7.7724e-05  & 2.0109  & 8.1076e-05  & 2.0104  & 7.0359e-05  & 2.0178
\end{tabular*}
{\rule{\temptablewidth}{0.9pt}}
\end{center}
\end{table}
\begin{table}[hbt!]
\begin{center}
 \caption{Numerical accuracy in spatial direction of scheme \eqref{sc1}--\eqref{sc4} with $\tau=\frac1{1000}$. }\label{table2}
 \renewcommand{\arraystretch}{1}
 \def\temptablewidth{1\textwidth}
 {\rule{\temptablewidth}{0.9pt}}
 \begin{tabular*}{\temptablewidth}{@{\extracolsep{\fill}}cccccccc}
   &$h$   &\multicolumn{2}{c}{$\alpha=1.2$}&\multicolumn{2}{c}{$\alpha=1.5$}
              &\multicolumn{2}{c}{$\alpha=1.8$}\\
         \cline{3-4}             \cline{5-6}             \cline{7-8}\\
             &&$E_2(\tau,h)$& Rate2   &$E_2(\tau,h)$& Rate2   &$E_2(\tau,h)$& Rate2  \\\hline
             &$1/20$     & 1.0942e-03  & $\ast$  & 1.3052e-03  & $\ast$  & 1.6671e-03  & $\ast$ \\
 {\bf Case 1}&$1/40$     & 2.7284e-04  & 2.0038  & 3.2598e-04  & 2.0014  & 4.1632e-04  & 2.0016 \\
             &$1/80$     & 6.7455e-05  & 2.0160  & 8.1203e-05  & 2.0052  & 1.0363e-04  & 2.0062 \\
             &$1/160$    & 1.6611e-05  & 2.0218  & 2.0018e-05  & 2.0202  & 2.5468e-05  & 2.0247 \\
     \hline
             &$1/20$     & 2.3688e-03  & $\ast$  & 2.2720e-03  & $\ast$  & 2.6671e-03  & $\ast$ \\
 {\bf Case 2}&$1/40$     & 5.9008e-04  & 2.0051  & 5.6571e-04  & 2.0058  & 6.6450e-04  & 2.0049 \\
             &$1/80$     & 1.4583e-04  & 2.0166  & 1.3971e-04  & 2.0177  & 1.6465e-04  & 2.0129 \\
             &$1/160$    & 3.4798e-05  & 2.0673  & 3.3241e-05  & 2.0714  & 3.9735e-05  & 2.0509 \\
                  \hline
             &$1/20$     & 2.1390e-03  & $\ast$  & 2.0746e-03  & $\ast$  & 2.4736e-03  & $\ast$ \\
 {\bf Case 3}&$1/40$     & 5.3295e-04  & 2.0048  & 5.1668e-04  & 2.0055  & 6.1637e-04  & 2.0047 \\
             &$1/80$     & 1.3171e-04  & 2.0166  & 1.2757e-04  & 2.0180  & 1.5269e-04  & 2.0132 \\
             &$1/160$    & 3.1417e-05  & 2.0677  & 3.0319e-05  & 2.0730  & 3.6806e-05  & 2.0526
\end{tabular*}
{\rule{\temptablewidth}{0.9pt}}
\end{center}
\end{table}

Table \ref{table1} reports the numerical results in time direction of the proposed scheme \eqref{sc1}--\eqref{sc4} with fixed $h=\frac1{1000}$ and different choices of $\alpha$ are taken. Meanwhile,  the numerical results in space direction with fixed $\tau=\frac1{1000}$ and different $\alpha$ are listed in Table \ref{table2}. From these two tables, one can see that the convergence rate for the three cases are both 2 in time and space, which are in accordance with the theoretical statements.

  \begin{table}[hbt!]
 \begin{center}
 \caption{Numerical accuracy in temporal direction of scheme \eqref{Csc1}--\eqref{Csc4} with $h=\frac1{100}$. }\label{table3}
 \renewcommand{\arraystretch}{1}
 \def\temptablewidth{1\textwidth}
 {\rule{\temptablewidth}{0.9pt}}
 \begin{tabular*}{\temptablewidth}{@{\extracolsep{\fill}}cccccccc}
  &$\tau$   &\multicolumn{2}{c}{$\alpha=1.2$}&\multicolumn{2}{c}{$\alpha=1.5$}
              &\multicolumn{2}{c}{$\alpha=1.8$}\\
         \cline{3-4}             \cline{5-6}             \cline{7-8}\\
                        &&$E_2(\tau,h)$& Rate1   &$E_2(\tau,h)$& Rate1   &$E_2(\tau,h)$& Rate1  \\\hline
             &$1/20$     & 2.5998e-03  & $\ast$  & 3.0099e-03  & $\ast$  & 3.0685e-03  & $\ast$ \\
 {\bf Case 1}&$1/40$     & 6.5113e-04  & 1.9974  & 7.5184e-04  & 2.0012  & 7.6475e-04  & 2.0045 \\
             &$1/80$     & 1.6285e-04  & 1.9994  & 1.8803e-04  & 1.9995  & 1.9099e-04  & 2.0015 \\
             &$1/160$    & 4.0729e-05  & 1.9995  & 4.7017e-05  & 1.9997  & 4.7701e-05  & 2.0014 \\
          \hline
             &$1/20$     & 5.4886e-03  & $\ast$  & 5.5733e-03  & $\ast$  & 4.7847e-03  & $\ast$ \\
 {\bf Case 2}&$1/40$     & 1.3782e-03  & 1.9936  & 1.4003e-03  & 1.9928  & 1.1961e-03  & 2.0001 \\
             &$1/80$     & 3.4517e-04  & 1.9975  & 3.5068e-04  & 1.9975  & 2.9863e-04  & 2.0019 \\
             &$1/160$    & 8.6352e-05  & 1.9990  & 8.7708e-05  & 1.9994  & 7.4512e-05  & 2.0028 \\
           \hline
             &$1/20$     & 5.0051e-03  & $\ast$  & 5.2124e-03  & $\ast$  & 4.5839e-03  & $\ast$ \\
 {\bf Case 3}&$1/40$     & 1.2548e-03  & 1.9960  & 1.3080e-03  & 1.9946  & 1.1452e-03  & 2.0010 \\
             &$1/80$     & 3.1412e-04  & 1.9981  & 3.2749e-04  & 1.9979  & 2.8591e-04  & 2.0020 \\
             &$1/160$    & 7.8576e-05  & 1.9991  & 8.1903e-05  & 1.9995  & 7.1346e-05  & 2.0027
\end{tabular*}
{\rule{\temptablewidth}{0.9pt}}
\end{center}
\end{table}
  \begin{table}[hbt!]
\begin{center}
 \caption{Numerical accuracy in spatial direction of scheme \eqref{Csc1}--\eqref{Csc4} with $\tau=\frac1{5000}$. }\label{table4}
 \renewcommand{\arraystretch}{1}
 \def\temptablewidth{1\textwidth}
 {\rule{\temptablewidth}{0.9pt}}
 \begin{tabular*}{\temptablewidth}{@{\extracolsep{\fill}}cccccccc}
  &$h$   &\multicolumn{2}{c}{$\alpha=1.2$}&\multicolumn{2}{c}{$\alpha=1.5$}
              &\multicolumn{2}{c}{$\alpha=1.8$}\\
         \cline{3-4}             \cline{5-6}             \cline{7-8}\\
                        &&$E_2(\tau,h)$& Rate2   &$E_2(\tau,h)$& Rate2   &$E_2(\tau,h)$& Rate2  \\\hline
             &$1/4$     & 8.6534e-04  & $\ast$  & 1.0312e-03  & $\ast$  & 1.3172e-03  & $\ast$ \\
 {\bf Case 1}&$1/8$     & 5.3074e-05  & 4.0272  & 6.3283e-05  & 4.0263  & 8.0835e-05  & 4.0263 \\
             &$1/16$    & 3.2641e-06  & 4.0232  & 3.9227e-06  & 4.0119  & 5.0070e-06  & 4.0130 \\
             &$1/32$    & 1.9472e-07  & 4.0672  & 2.3157e-07  & 4.0823  & 2.9156e-07  & 4.1021 \\
          \hline
             &$1/4$     & 1.8717e-03  & $\ast$  & 1.7946e-03  & $\ast$  & 2.1058e-03  & $\ast$ \\
 {\bf Case 2}&$1/8$     & 1.1476e-04  & 4.0277  & 1.1000e-04  & 4.0281  & 1.2907e-04  & 4.0281 \\
             &$1/16$    & 7.0565e-06  & 4.0235  & 6.7590e-06  & 4.0245  & 7.9587e-06  & 4.0195 \\
             &$1/32$    & 3.5757e-07  & 4.3026  & 3.3780e-07  & 4.3226  & 4.2591e-07  & 4.2239 \\
           \hline
             &$1/4$     & 1.6891e-03  & $\ast$  & 1.6377e-03  & $\ast$  & 1.9516e-03  & $\ast$ \\
 {\bf Case 3}&$1/8$     & 1.0365e-04  & 4.0264  & 1.0047e-04  & 4.0268  & 1.1973e-04  & 4.0268 \\
             &$1/16$    & 6.3734e-06  & 4.0235  & 6.1724e-06  & 4.0248  & 7.3808e-06  & 4.0198 \\
             &$1/32$    & 3.2240e-07  & 4.3051  & 3.1045e-07  & 4.3134  & 4.0458e-07  & 4.1893
\end{tabular*}
{\rule{\temptablewidth}{0.9pt}}
\end{center}
\end{table}

 Table \ref{table3} lists the numerical results of the compact scheme \eqref{Csc1}--\eqref{Csc4} in time direction with fixed $h=\frac1{100}$, and Table \ref{table4} shows the numerical results in space direction with fixed $\tau=\frac1{5000}$ and different $\alpha$. The second-order accuracy in time and fourth-order accuracy in space are apparent in these two tables, respectively. We note that,
  in our implementation to the the non-global Lipschitz continuous term  $f(u)=u^3$, the condition $\tau=\nu h^{\frac12+\epsilon}$
  theoretically imposed in Remark \ref{condition} is not necessary.

\subsection{Comparison with other numerical schemes}
In this subsection, we give some comparisons between our proposed method and some standard methods for solving the equation \eqref{eq1}--\eqref{eq3}. It will be shown that our linearized schemes have advantages in both theoretical analysis and numerical computation.

One may construct the numerical schemes where time fractional derivative was approximate by the widely used classical $L_1$ formula \cite{SunANM2006} to solve this kind of equations. The scheme will take the form  as
\begin{align}\label{sch-compare}
\frac1{\mu}\left[a_0\delta_t u_i^{n+\frac12}-\sum_{k=1}^{n-1}(a_{n-k-1}-a_{n-k})\delta_t u_i^{k+\frac12}-a_{n-1}\psi_i \right]=\delta_x^2 u_i^{n+\frac12}-f(u_i^{n+\frac12})+p_i^{n+\frac12},
\end{align}
where $a_k=(k+1)^{2-\alpha}-k^{2-\alpha}$.

There are two commonly used approaches to approximate the nonlinear term $f(u_i^{n+\frac12})$. The first is the central approximation, i.e. $f(u_i^{n+\frac12})=\big[f(u_i^{n+1})+f(u_i^n)\big]/2+{\cal O}(\tau^2)$,
and the resulting scheme will be accurate of temporal order ${\cal O}(\tau^{3-\alpha})$, e.g. \cite{Vong2014,Vong2015,ChenH-Taiwan}, but this leads to nonlinear treatment so that iterative methods are required.
 However, the iterative methods would cause additional computational costs. Here we give a comparison by a numerical example, we apply our proposed scheme \eqref{sc1}--\eqref{sc4} and the scheme \eqref{sch-compare} with the above nonlinear approximation to solve the example in subsection \ref{Accuracy}, in which the nonlinear scheme is dealt with a fixed-point method.
 For simplicity of presentation, we only list results of {\bf Case 2} in Table \ref{table5}, remarking that similar results can be obtained for other cases and different parameters. From this table, one can clearly see that our scheme works more efficiently and spend less CPU (seconds) time than the iterative method. Moreover, when applying iterative methods for solving nonlinear schemes,
 the convergence of the iterative methods is usually not easy to be established.
  The second way to approximate nonlinear term is a linear approach, e.g. $f(u_i^{n+\frac12})=f(u_i^n)+{\cal O}(\tau)$.
 The stability and convergence can be verified like \cite{Dehghan2015}, but its convergence order is only ${\cal O}(\tau)$ or no more than ${\cal O}(\tau^{3-\alpha})$ in time.
   \begin{table}[hbt!]
 \begin{center}
 \caption{Numerical results of {\bf Case 2} by applying our scheme \eqref{sc1}--\eqref{sc4} and nonlinear scheme \eqref{sch-compare} with fixed-point iteration method, respectively, for $\alpha=1.8$ and $h=\frac1{1000}$.}\label{table5}
 \renewcommand{\arraystretch}{1}
 \def\temptablewidth{0.9\textwidth}
 {\rule{\temptablewidth}{0.9pt}}
 \begin{tabular*}{\temptablewidth}{@{\extracolsep{\fill}}ccccccc}
 $\tau$ &\multicolumn{3}{c}{scheme \eqref{sc1}--\eqref{sc4}}&\multicolumn{3}{c}{scheme \eqref{sch-compare} with iteration}\\
         \cline{2-4}                     \cline{5-7}
         &$E_2(\tau,h)$& Rate1   &CPU(s)   &$E_2(\tau,h)$& Rate1   &CPU(s)\\\hline
 $1/20$  & 4.7836e-03  & $\ast$  &0.35   & 1.3562e-02    & $\ast$  & 1.06  \\
 $1/40$  & 1.1950e-03  & 2.0011  &0.95   & 6.3733e-03    & 1.0895  & 3.14  \\
 $1/80$  & 2.9756e-04  & 2.0058  &2.24   & 2.8693e-03    & 1.1513  & 6.49  \\
 $1/160$ & 7.3449e-05  & 2.0184  &5.31   & 1.2624e-03    & 1.1845  & 13.38  \\
 $1/320$ & 1.7524e-05  & 2.0674  &11.29  & 5.4810e-04    & 1.2037  & 27.55
\end{tabular*}
{\rule{\temptablewidth}{0.9pt}}
\end{center}
\end{table}

One may also use the formula in Lemma \ref{lemma_time} to replace the $L_1$ formula and then achieve a ${\cal O}(\tau^2)$ in fractional approximation,
 but this gives rise to some difficulties in theoretical analysis and, to our knowledge,
 the linear method like that in \cite{Dehghan2015} may not be applied. Consequently,
 iterative methods are required.

All the discussions above illustrate that our proposed linearized schemes would be favorable to some usual numerical schemes.

\section{Concluding remarks}\label{Concluding}
 In this paper, we consider a nonlinear fractional order Klein-Gordon type equation.
 We proposed a linearized finite difference scheme to solve the problem numerically.
 The main advantage is that the nonlinear term is evaluated on previous time level.
 As a result, iterative method is not needed for implementation.
 However, shifting the evaluation to previous level causes difficulties in theoretical analysis.
 Inspired by some recent studies, we show that the scheme converges with second-order in time.
 The results are justified by some numerical tests.

\section*{Acknowledgment}
The authors would like to thank the referees for their comments which improve the paper significantly.
We also want to  thank Prof. Honglin Liao for helpful discussion on the estimates of Lemmas \ref{akbk} and \ref{dk}.

\end{document}